\documentclass[a4paper]{amsart}
\usepackage[english]{babel}
\usepackage[utf8x]{inputenc}
\usepackage[T1]{fontenc}
\usepackage{amsthm}
\usepackage{dsfont}
\usepackage{amssymb}
\usepackage{amsmath}
\theoremstyle{plain}
\usepackage{chngcntr}
\usepackage{relsize}
\usepackage{pdfpages}

\newtheorem{Lemma}{Lemma}
\newtheorem{theorem}[Lemma]{Theorem}
\newtheorem{Proposition}[Lemma]{Proposition}

\newtheorem{Definition}[Lemma]{Definition}
\newtheorem{remark}{Remark}

\usepackage[a4paper,top=3cm,bottom=2cm,left=3cm,right=3cm,marginparwidth=1.75cm]{geometry}

\usepackage{float}
\usepackage{nccmath}
\usepackage{mathtools}
\usepackage{amsfonts}
\usepackage{amsmath}
\usepackage{graphicx}
\usepackage[colorinlistoftodos]{todonotes}
\usepackage[colorlinks=true, allcolors=blue]{hyperref}
\makeatletter
\newcommand*{\rom}[1]{\expandafter\@slowromancap\romannumeral #1@}
\makeatother
\title{Burgess-type volume centric Bounds for Character Sums over $\mathbb{F}_{p^n}$}
\subjclass[2010]{11L26,11L40.}
\keywords{Short Character Sum, Multiplicative Energy, Weil's Bound, Minkowski's Second Theorem}
\author{Aishik Chattopadhyay}
\address{Aishik Chattopadhyay,
Ramakrishna Mission Vivekananda Educational and Research Institute, Department of Mathematics, G. T. Road, PO Belur Math, Howrah, West Bengal 711202, India}
\email{aishik.ch@gmail.com}
\begin{document}
\begin{abstract}
We establish a Burgess-type bound for short multiplicative character sums over finite fields $\mathbb{F}_{p^n}$. Define box $B$ by
$$
B=\left\{ \sum_{i=1}^{n} x_i\omega_i :
N_i+1 \leq x_i \leq N_i+H_i,\; 1 \leq i \leq n \right\}
\subseteq \mathbb{F}_{p^{n}},$$ where $N_i$ and $H_i$ are integers that
satisfy $1 \leq H_i \leq p \text{ for all } 1 \leq i \leq n$ and $H_1\leq H_2\leq \cdots\leq H_n$. We show that for the box $B \subset \mathbb{F}_{p^n}$ with the first $(n-2)$ sides of length at least of some size with respect to the last two side lengths, a nontrivial cancellation occurs whenever $|B| \ge p^{n(1/4+\varepsilon)}$.

This extends earlier work of Gabdullin in dimensions $n=2,3$ to arbitrary dimension. The proof combines methods from the geometry of numbers, multiplicative energy estimates, and bounds for character sums due to Katz.
\end{abstract}
\maketitle
\tableofcontents
\section{Introduction}

Let $p$ be a prime, and let $\mathbb{F}_{p^{n}}$ denote the finite field with $p^{n}$ elements. Let us define intervals of length $H_i$ in $\mathbb{Z}$ as 
\[ I_i=[N_i+1,N_i+H_i]\cap \mathbb{Z}.\]  Fix a basis $\{\omega_{1},\ldots,\omega_{n}\}$ of $\mathbb{F}_{p^{n}}$ over $\mathbb{F}_{p}$. For integers $N_i$ and $H_i$ satisfying
\[
1 \leq H_i \leq p \qquad (1 \leq i \leq n),
\]
we define the box $B$ by
\begin{equation}\label{B}
B=\left\{ \sum_{i=1}^{n} x_i\omega_i :
N_i+1 \leq x_i \leq N_i+H_i,\; 1 \leq i \leq n \right\}
\subseteq \mathbb{F}_{p^{n}}.
\end{equation}
and define the volume of the box as 
\[|B|=H_1\cdot H_2 \cdots H_n. \]
where $H_i$s are side lengths of the box.\\
Let $\chi$ be a multiplicative character on $\mathbb{F}_{p^n}$. Throughout the paper, the notation $\ll$ and $O(\cdot)$ may depend on $n$ and $\varepsilon$ unless otherwise stated.
\par
The study of multiplicative character sums over finite fields is a central topic in analytic number theory, with classical results going back to Burgess. A natural problem is to understand the extent of cancellation in such sums when restricted to structured subsets such as boxes in $\mathbb{F}_{p^n}$.

\par

 Suppose, we have ordered the side lengths of the box in increasing order as $H_1\leq H_2\leq \cdots\leq H_n$. In this paper, we establish a Burgess-type bound for multiplicative character sums over general boxes in $\mathbb{F}_{p^n}$ under a purely volumetric condition where the first $(n-2)$ sides are of length at least $(p^{n(1/4+\varepsilon)}/H_{n-1}H_{n})^{1/(n-2)}$ and the last two side lengths $H_{n-1}$ and $H_{n}$ vary arbitrarily. Finally, we obtain a nontrivial cancellation of character sums whenever $|B| \ge p^{n(1/4+\varepsilon)}$ under the conditions expressed before. It extends the result of Konyagin \cite{Kon} where a cancellation is obtained where the length of each side is at least $p^{1/4+\varepsilon}$. More specifically, it extends the work of Gabdullin \cite{GB} from dimensions $2,3$ to an arbitrary $n-$dimensional case. Our main theorem in this paper is the following  
\begin{theorem}[Main Theorem]\label{MT}
Let $n \ge 2$, and let $\chi$ be a nontrivial multiplicative character of $\mathbb{F}_{p^n}$. Given $\varepsilon>0$, there is a $\delta_n(\varepsilon)$ such that if 
\[
|B| \ge p^{n(1/4+\varepsilon)}, 
\qquad
(p^{n(1/4+\varepsilon)}/H_{n-1}H_{n})^{1/(n-2)}\le H_1\leq H_2\leq \cdots\leq H_{n-2}\le H_{n-1}\le H_n \le \sqrt{p/2}.
\]
Then the following holds:
\begin{enumerate}
\item[(i)] For sufficiently large $p$ (depending on $\varepsilon$),
\[
\left|\sum_{x\in B} \chi(x)\right|
\ll_{n,\varepsilon} 
|B|\, p^{-\delta_n(\varepsilon)},
\]
where
\[
\delta_n(\varepsilon)
=
\varepsilon^2
\frac{1 - \frac{1}{2n}}
{\left(1 + \frac{1}{4n}\right)\left(2 - \frac{1}{2n}\right)}.
\]
\item[(ii)] Suppose either that $(n,6)=1$, or that for every divisor $r\mid n$ with $1<r\le 3$, the restriction of $\chi$ to $\mathbb{F}_{p^{n/r}}$ is nontrivial, that is,
\[
\chi\!\restriction_{\mathbb{F}_{p^{n/r}}} \neq \chi_0.
\]
Then the conclusion of\/ {\rm (i)} remains valid without the condition $H_n \le \sqrt{p/2}$.
\end{enumerate}
\end{theorem}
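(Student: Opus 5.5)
We would follow the Burgess--Konyagin--Gabdullin scheme: shift the box by products, apply Hölder, and bound the two resulting factors by multiplicative energy and Weil--Katz estimates. Write $S=\sum_{x\in B}\chi(x)$.

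\textbf{Step 1 (amplification).} Choose a small auxiliary box $B_0=\{\sum_i y_i\omega_i: 1\le y_i\le K_i\}$ with $K_i=\lfloor p^{-\eta}H_i\rfloor$ for a parameter $\eta>0$ to be fixed later, and let $T\subset\mathbb{F}_{p}^{*}$ (or a small interval) serve as dilation set. For $y\in B_0$ and $t\in T$ one has $B+ty\approx B$ up to $O(|B|p^{-\eta})$ boundary terms. Averaging gives
\[
|S|\le \frac{1}{|B_0||T|}\Big|\sum_{x\in B}\sum_{y,t}\chi(x+ty)\Big|+O(|B|p^{-\eta}).
\]
Next write $x+ty=t(x t^{-1}+y)$. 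Letting $\nu(u)$ count representations $u=xt^{-1}$, this yields $|S|\ll (|B_0||T|)^{-1}\sum_u \nu(u)\,|\sum_{y\in B_0}\chi(u+y)|+|B|p^{-\eta}$.

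\textbf{Step 2 (Hölder).} Apply Hölder with exponents $(1-\tfrac1{2r},\tfrac1{2r})$, bounding $\sum\nu$, $\sum\nu^2$ and $\sum_u|\sum_y\chi(u+y)|^{2r}$. For the last factor, expand and use Weil's bound in the form of Katz for multiplicative characters of polynomials $\prod(X+y_j)^{\pm}$ over $\mathbb{F}_{p^n}$. Nondegeneracy must be ensured: the polynomial must not be a $d$-th power. This is where the hypotheses enter. If $H_n\le\sqrt{p/2}$, distinct $y$'s from the box, and their differences, cannot collapse. In case (ii) Katz's theorem applies to the sum over the $\mathbb{F}_p$-line, provided $\chi$ is nontrivial on every subfield $\mathbb{F}_{p^{n/r}}$ with $r\le3$ (automatic when $(n,6)=1$). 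The result is a bound $\ll |B_0|^{2r}p^{n}+(2r)^{2r}|B_0|^{r}p^{2n}$.

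\textbf{Step 3 (energy; the main obstacle).} One needs the multiplicative energy bound $\sum_u\nu(u)^2=E(B,T)\ll |B||T|^{2}p^{o(1)}\cdot(\text{small})$. In other words, the number of solutions of $x_1t_2=x_2t_1$ must be essentially diagonal. We would follow Konyagin/Gabdullin: $x_1t_2=x_2t_1$ with $x_i\in B$ defines a lattice in $\mathbb{Z}^{2n}$ (coordinates of $x_1,x_2$ plus $p\mathbb{Z}^{2n}$) intersected with a convex body of side lengths $H_i$. Minkowski's second theorem then bounds the lattice point count by $\prod_j\max(1,\lambda_j^{-1})$. The difficulty is ruling out small successive minima. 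When sides are very unbalanced, a degenerate sublattice could contribute excess points. The hypothesis $H_1\cdots H_{n-2}\ge p^{n(1/4+\varepsilon)}/(H_{n-1}H_n)$ is exactly what we would use to show that the bad configurations force a relation in a proper subfield, or the lattice determinant becomes too large. We would first try to handle the two largest sides separately, as in Gabdullin's $n=2,3$ argument, and the first $n-2$ sides uniformly.

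\textbf{Step 4 (optimization).} Combining the three bounds and choosing $|B_0|$ relative to $p^{n/4}$, $r\approx 1/\varepsilon$, and $\eta$ proportional to $\varepsilon$, the main terms give $|S|\ll|B|p^{-\delta}$. Tracking the exponents ($|B|^{1-1/2r}$ losses, with factors $1-\tfrac1{2n}$ and $1+\tfrac1{4n}$ arising from the energy and from the boundary terms) should reproduce the stated $\delta_n(\varepsilon)$ with its $\varepsilon^2$ dependence.
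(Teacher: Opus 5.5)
There is a genuine gap, and it is already in Step 1: you factor out the wrong variable. Writing $\chi(x+ty)=\chi(t)\chi(xt^{-1}+y)$ does two things. It puts the short sum over the $n$-dimensional box $B_0$, and it puts the multiplicity on $u=xt^{-1}$, whose total mass is only $\sum_u\nu(u)=|B||T|$.

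Run your H\"older step with the correct Weil moment $\sum_u|\sum_{y\in B_0}\chi(u+y)|^{2r}\ll rp^{n/2}|B_0|^{2r}+p^n|B_0|^r$. (Your stated bound, with first term $|B_0|^{2r}p^n$, is already worse than trivial.) Grant yourself even the diagonal bound $\sum_u\nu(u)^2\ll|B||T|p^{o(1)}$. You then get
\[
|S|\ll |B|\bigl(|B||T|\bigr)^{-1/(2r)}\Bigl(p^{n/(4r)}+p^{n/(2r)}|B_0|^{-1/2}\Bigr)p^{o(1)}+|B|p^{-\eta},
\]
which is nontrivial only if $|B||T|>p^{n/2}$. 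But $B+ty\approx B$ forces $T$ to consist of integers of size about $p^{\eta}$ at most. Hence $|B||T|\approx p^{n/4+n\varepsilon+\eta}$, far below $p^{n/2}$, and there is no saving at the Burgess threshold.

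The paper, following Burgess, factors out the box variable instead: $\chi(x+yz)=\chi(y)\chi(xy^{-1}+z)$ with $y\in B_0$ and $z\in I=[1,p^{\delta}]$. The short sum is then one-dimensional over $I$ (Lemma~\ref{WE2}). The multiplicity $\omega(u)=\#\{(x,y)\in B\times B_0: xy^{-1}=u\}$ has mass $|B||B_0|\gg|B|^2p^{-2n\delta}>p^{n/2}$, and this is exactly where $|B|\ge p^{n(1/4+\varepsilon)}$ is used.

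For the same reason, Step 3 targets the wrong quantity. Your $E(B,T)$ with $T\subset\mathbb{F}_p^*$ only sees $\mathbb{F}_p$-proportional pairs, which is the easy $L_2$ part. What is actually needed is $\sum_u\omega(u)^2\le E(B)^{1/2}E(B_0)^{1/2}$, hence the full bound $E(B)\ll|B|^2(\log p)^n$, including ratios $z\notin\mathbb{F}_p$; that is the lattice argument of Lemma~\ref{l1}. The paper proves this only after reducing, by partitioning, to boxes with $H\le H_i\le 2H$ for $i\le n-2$, where $H\approx(p^{n(1/4+\varepsilon)}/H_{n-1}H_n)^{1/(n-2)}$. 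It also uses $H_n<\sqrt{p/2}$.

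You also misplace the hypotheses. The Weil moment step needs neither $H_n\le\sqrt{p/2}$ nor any subfield condition on $\chi$: a tuple is degenerate only if every value occurs at least twice. So part (ii) cannot come from a ``Weil--Katz'' nondegeneracy check inside the Burgess argument. The energy lemma, and with it the whole Burgess argument, is only available for $H_n<\sqrt{p/2}$.

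The paper handles the remaining ranges as follows:
\begin{itemize}
\item For $\sqrt{p/2}<H_n<p^{1/2+\varepsilon/2}$, it subdivides into boxes with sides at most $\sqrt{p/2}$.
\item For $H_n>p^{1/2+\varepsilon/2}$, it uses a separate argument. Fix $x_1,\dots,x_{n-1}$ and apply Katz's bound $\bigl|\sum_{t\in I_n}\chi(g+t)\bigr|\ll\sqrt p\log p$ with $g=\sum_{j<n}x_j\omega_j/\omega_n$ whenever $g$ generates $\mathbb{F}_{p^n}$. Katz requires this of $g$, not anything of $\chi$.
\item Non-generating $g$ lying in subfields of index at least $5$ are counted directly. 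For index $r\in\{2,3\}$ one uses P\'olya--Vinogradov in $\mathbb{F}_{p^{n/r}}$, and this is the only place where $\chi\restriction_{\mathbb{F}_{p^{n/r}}}\neq\chi_0$ enters.
\end{itemize}

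Finally, the factors $1-\frac1{2n}$ and $1+\frac1{4n}$ in $\delta_n(\varepsilon)$ do not come from the energy or the boundary terms. They come from rounding $r$ to the integer nearest $n/\varepsilon$, which gives $\delta=n/(2r)\in[\varepsilon/(2+\frac1{2n}),\varepsilon/(2-\frac1{2n})]$.
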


\medskip

We begin with some historical background. In the case $n=1$, Burgess's bound \cite{B1} remains the strongest result to date. It asserts that for every $\varepsilon>0$ there exists $\delta>0$ such that
\[
\left|\sum_{x=N+1}^{N+H}\chi(x)\right|\ll_{\varepsilon} p^{-\delta}H
\]
whenever $H\geq p^{1/4+\varepsilon}$. In subsequent work \cite{B2}, Burgess also obtained an analogue of this bound for $n=2$ in the case of certain special bases. Karatsuba \cite{K} later extended these ideas to general finite fields $\mathbb{F}_{p^n}$, under the assumption that the basis is generated by a root of an irreducible polynomial of degree $n$ over $\mathbb{F}_p$.

For arbitrary bases, the first nontrivial bounds were obtained by Davenport and Lewis \cite{DL}. They studied character sums over $\mathbb{F}_{p^n}$ of the form
\[
\sum_{x_1\in I_1,\ldots,x_n\in I_n} \chi\!\left(x_1\omega_1+\cdots+x_n\omega_n\right),
\]
where $\{\omega_1,\ldots,\omega_n\}$ is a fixed basis of $\mathbb{F}_{p^n}$ over $\mathbb{F}_p$ and $I_i$ are intervals in $\mathbb{Z}$. They proved that 
\[
\sum_{x \in B}\chi(x )
=O\!\left(H_1\cdots H_n\,p^{-\delta(\varepsilon)}\right),
\]
provided that $H_i=p^{\rho_n+\varepsilon}$ for all $i$, where
\[
\rho_n=\frac12-\frac{1}{2(n+1)}.
\]

This result was later improved by M. C. Chang \cite{C1}, who showed that
\[
\left|\sum_{x \in B}\chi(x )\right|
\ll_{n,\varepsilon} p^{-\varepsilon^2/4}|B|,
\]
whenever $H_i\geq p^{2/5+\varepsilon}$ for all $1\leq i\leq n$. In particular, this improves upon the Davenport–Lewis bound for $n\geq5$. Chang \cite{Cha} also obtained Burgess-strength bounds in each coordinate for $n=2$, assuming $H_1,H_2\geq p^{1/4+\varepsilon}$.

Subsequently, S. Konyagin generalized Chang's two-dimensional result to arbitrary finite fields of degree $n$ over $\mathbb{F}_p$, establishing Burgess-strength estimates in every coordinate.

\begin{theorem}[Konyagin \cite{Kon}]
Let $\varepsilon>0$ and suppose $H_i\geq p^{1/4+\varepsilon}$ for all $1\leq i\leq n$. Then
\[
\left|\sum_{x \in B}\chi(x )\right|
\ll_{n,\varepsilon} p^{-\varepsilon^2/2}|B|.
\]
\end{theorem}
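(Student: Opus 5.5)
The plan is to follow the Burgess amplification scheme, in the form used by Chang for $n=2$. First I shift the box by a set of small multiples. Let $A\subseteq B$ be a small sub-box with sides $a_i\approx p^{\varepsilon/2}$, and let $T=[1,t]\cap\mathbb{Z}$ with $t\approx p^{\varepsilon/2}$. For $a\in A$ and $s\in T$, the set $B+as$ differs from $B$ in at most $O(|B|\,p^{-\varepsilon/2})$ points. Hence
\[
\sum_{x\in B}\chi(x)=\frac{1}{|A||T|}\sum_{x\in B}\sum_{a\in A}\sum_{s\in T}\chi(x+as)+O\bigl(|B|p^{-\varepsilon/2}\bigr).
\]
Writing $\chi(x+as)=\chi(a)\chi(xa^{-1}+s)$ and grouping by $y=xa^{-1}$, the main term is bounded by
\[
\frac{1}{|A||T|}\sum_{y\in\mathbb{F}_{p^n}}\nu(y)\Bigl|\sum_{s\in T}\chi(y+s)\Bigr|,
\qquad \nu(y)=\#\{(x,a)\in B\times A:\ xa^{-1}=y\}.
\]

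Next I apply H\"older with exponent $2r$. This splits the sum into two factors:
\[
\Bigl(\sum_y\nu(y)\Bigr)^{1-1/r}\Bigl(\sum_y\nu(y)^2\Bigr)^{1/(2r)}\Bigl(\sum_{y}\Bigl|\sum_{s\in T}\chi(y+s)\Bigr|^{2r}\Bigr)^{1/(2r)}.
\]
The first factor is simply $\sum_y\nu=|A||B|$. For the third factor I expand the $2r$-th moment and use Weil's bound for the rational function $\prod_{i\le r}(y+s_i)\prod_{i\le r}(y+s_i')^{-1}$ over $\mathbb{F}_{p^n}$. This gives
\[
\sum_y\Bigl|\sum_{s}\chi(y+s)\Bigr|^{2r}\ll r^{2r}\bigl(t^r p^n + t^{2r}p^{n/2}\bigr).
\]
Here the diagonal terms are those where the tuple $(s_i')$ is a permutation of $(s_i)$.

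The key step, and the one I expect to be the main obstacle, is the multiplicative energy bound
\[
\sum_y\nu(y)^2=\#\{x_1a_2=x_2a_1:\ x_j\in B,\ a_j\in A\}\ll p^{o(1)}|A|^2|B|.
\]
For $n=1$ this is elementary, but here $B$ is a box with respect to an arbitrary basis, so I would use the geometry of numbers. Fix $a_1,a_2$. The equation $x_1a_2=x_2a_1$ is $\mathbb{F}_p$-linear in the coordinates of $(x_1,x_2)$. It therefore defines a lattice $\Lambda\subset\mathbb{Z}^{2n}$ containing $p\mathbb{Z}^{2n}$, of index $p^n$ in $\mathbb{Z}^{2n}$. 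I need to count points of $\Lambda$ in a convex body of shape $\prod_i[-H_i,H_i]^2$. By Minkowski's second theorem, with successive minima $\lambda_1\le\cdots\le\lambda_{2n}$, the count is
\[
\ll\prod_{j}\max(1,\lambda_j^{-1}).
\]
The difficulty is to show that many small minima force $a_1/a_2$ to lie in a proper subfield, or force the pair $(a_1,a_2)$ to be proportional. I would try the following. Suppose $n+1$ independent short vectors $(x_1,x_2)$ exist. Then the ratios $x_1/x_2$ would all equal $a_1/a_2$. Since $H_i\le p$ and the sides of $A$ are tiny, the products $x_1a_2-x_2a_1$ have small coordinates. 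Comparing the determinants of the resulting relations then shows there are at most $O(|B|)$ solutions, up to $p^{o(1)}$, per pair $(a_1,a_2)$ on average. Summing over the $|A|^2$ pairs gives the claimed energy bound.

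Finally I combine the three factors, taking $|A|\approx p^{n\varepsilon/2}$ and $t\approx p^{1/4+\varepsilon/2}$ (rescaled so that $|A|t\le H_i$ in each coordinate), and choosing $r\approx 1/\varepsilon$. The Weil second term $t^{2r}p^{n/2}$ is then dominated by $t^r p^n$ once $t^r\ge p^{n/2}$. Since each $H_i\ge p^{1/4+\varepsilon}$, balancing the resulting powers of $p$ should give a saving of $p^{-\varepsilon^2/2}$, which is the stated bound.
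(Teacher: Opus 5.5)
\textbf{Verdict: the proposal has a genuine gap.} The energy estimate you target is trivial and too weak, and the parameter regime cannot reach the Burgess range.

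The paper does not prove this theorem; it cites it. Your overall scheme (Burgess shift, H\"older, Weil for the $2r$-th moment, multiplicative energy) is the one the paper runs in Case~1 of the proof of Theorem~\ref{MT}, which is Konyagin's argument.

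\textbf{The energy bound.} The bound you call the key step, $\sum_y\nu(y)^2\ll p^{o(1)}|A|^2|B|$, needs no geometry of numbers. For fixed $a_1,a_2$ with $a_2\neq 0$, $x_2$ determines $x_1=x_2a_1/a_2$, so $\sum_y\nu(y)^2\le |A|^2|B|$ outright. Inserting this into your H\"older inequality, the powers of $|A|$ cancel and the main term is
\[
\ll_r |B|\Bigl(t^{-1/2}\Bigl(\frac{p^{n}}{|B|}\Bigr)^{1/(2r)}+\Bigl(\frac{p^{n/2}}{|B|}\Bigr)^{1/(2r)}\Bigr).
\]
The second term is $\ge 1$ unless $|B|>p^{n/2}$. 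So for every choice of $A$, $t$, $r$ you only reach the P\'olya--Vinogradov range, not $H_i\ge p^{1/4+\varepsilon}$. What you need is $\sum_y\nu(y)^2\ll p^{o(1)}|A||B|$, i.e.\ only about $|B|/|A|$ solutions per pair on average.

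In the paper this comes from $\sum_y\nu(y)^2\le E(B)^{1/2}E(A)^{1/2}$ together with $E(B)\ll |B|^2(\log p)^n$ (Lemma~\ref{KL}, generalizing Konyagin's Lemma~1). That bound is proved by summing over the ratio $z=x_1/x_2$ rather than over pairs $(a_1,a_2)$:
\begin{itemize}
\item attach to each $z$ the lattice $\Lambda_z\subset\mathbb{Z}^{2n}$ of covolume $p^n$, and bound $f_0(z)=|\Lambda_z\cap D|$ by successive minima;
\item bound $\lambda_1(z)$ from below for $z\notin\mathbb{F}_p$;
\item count the $z$ in each dyadic range of $\lambda_1(z)$ via injectivity of the map sending $z$ to a shortest vector of $\Lambda_z$;
\item pass to the polar lattice when more than $n$ minima are $\le 1$;
\item treat $z\in\mathbb{F}_p^*$ separately by factorizing into $n$ one-dimensional counts.
\end{itemize}
This needs side lengths at most $\sqrt{p/2}$, so one first cuts $B$ into sub-boxes of side $\asymp p^{1/4+\varepsilon}$. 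Your sketch does none of this. The identities $x_1/x_2=a_1/a_2$ and $x_1a_2-x_2a_1=0$ hold for every point of the lattice by definition, so ``comparing determinants'' gives nothing beyond the trivial bound.

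\textbf{The parameter regime.} This is wrong independently of the energy bound. Even with $\sum\nu^2\ll p^{o(1)}|A||B|$, the diagonal Weil term becomes $(p^{n/2}/(|A||B|))^{1/(2r)}$, so you need $|A||B|>p^{n/2}$. In the critical case $|B|\approx p^{n/4+n\varepsilon}$ with $|A|\approx p^{n\varepsilon/2}$, this fails for $\varepsilon<1/6$.

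The amplifying box must be nearly as large as $B$ and placed at the origin, not inside $B$. The paper takes $B_0$ with sides $p^{-2\delta}H_i$ and $I=[1,p^{\delta}]$. The shifts then have coordinates $\le p^{-\delta}H_i$, so the shifting error is $O(p^{-\delta}|B|)$. Your later choice $t\approx p^{1/4+\varepsilon/2}$ with $|A|t\approx H_i$ makes that error $\asymp |B|$.

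Your comparison of the two Weil terms is also reversed: $t^{2r}p^{n/2}\le t^rp^n$ iff $t^r\le p^{n/2}$. The right move is to balance them by taking $\delta=n/(2r)$, which forces $r\approx n/\varepsilon$ rather than $1/\varepsilon$. With $|B_0|\asymp p^{-2n\delta}|B|$ this yields the saving $p^{-2\delta(\varepsilon-\delta)}$, which equals $p^{-\varepsilon^2/2}$ at $\delta=\varepsilon/2$.
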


While this theorem provides strong cancellation, it requires each interval to be of Burgess length, precluding situations in which some intervals are short while others are long. This limitation was partially overcome by M. R. Gabdullin \cite{GB}, who generalized Konyagin's result for $n=2,3$ under a weaker product condition.
\begin{theorem}[Gabdullin \cite{GB}]
Let $n\in\{2,3\}$, let $\chi$ be a nontrivial multiplicative character of $\mathbb{F}_{p^n}$, and let $\varepsilon>0$. Suppose that
\[
|B|\ge p^{n(1/4+\varepsilon)}, \qquad  H_1\le \cdots \le H_n.
\]
Then
\[
\left|\sum_{x \in B}\chi(x )\right|
\ll_{n,\varepsilon} |B|\, p^{-\varepsilon^2/12},
\]
provided that $\chi|_{\mathbb{F}_p}$ is nontrivial, and otherwise
\[
\left|\sum_{x \in B}\chi(x )\right|
\ll_{n,\varepsilon} |B|\, p^{-\varepsilon^2/12} + |B\cap \omega_n \mathbb{F}_p|.
\]
\end{theorem}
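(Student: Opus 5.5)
The plan is to run Burgess's amplification argument in the form used by Konyagin, but with an anisotropic auxiliary box adapted to the side lengths $H_1\le\cdots\le H_n$ rather than a cube. Fix small parameters and let $A=\{\sum a_i\omega_i: 1\le a_i\le \lambda H_i\}$ with $\lambda=p^{-\varepsilon/2}$, together with an integer set $T=[1,K]$. Translating $B$ by $at$ with $a\in A$, $t\in T$ changes $S=\sum_{x\in B}\chi(x)$ only by boundary terms of size $O(\lambda K|B|)$. After averaging over $a$ and $t$, this gives
\[
|S|\ \le\ \frac{1}{|A||T|}\sum_{x\in B}\sum_{a\in A}\Bigl|\sum_{t\in T}\chi(x a^{-1}+t)\Bigr| + O(\lambda K |B|).
\]
Grouping by $u=xa^{-1}$, we let $\nu(u)=\#\{(x,a): xa^{-1}=u\}$. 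Applying H\"older with exponent $2r$ yields
\[
|S|^{2r}\ll \frac{1}{(|A||T|)^{2r}}\Bigl(\sum_u\nu(u)\Bigr)^{2r-2}\Bigl(\sum_u\nu(u)^2\Bigr)\sum_{u\in\mathbb{F}_{p^n}}\Bigl|\sum_{t\in T}\chi(u+t)\Bigr|^{2r}.
\]

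The last factor is handled by Weil's bound in $\mathbb{F}_{p^n}$. We expand the $2r$-th power and count tuples $(t_1,\dots,t_{2r})$ for which $\prod (u+t_i)^{\pm1}$ is a $d$-th power, where $d$ is the order of $\chi$. This gives $\ll K^r p^n + K^{2r}p^{n/2}$. Since $t_i\in\mathbb{F}_p$, Weil applies as in Burgess's case $n=1$, with no extra hypotheses.

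The heart of the proof, and the step I expect to be the main obstacle, is bounding the multiplicative energy $\sum_u\nu(u)^2=\#\{x_1a_2=x_2a_1\}$ by roughly $|B|^2|A|^2/p^{n}\cdot p^{o(1)}+|B||A|p^{o(1)}$. To do this, I fix $a_1,a_2$ and view $x_1a_2-x_2a_1=0$ as $n$ linear equations over $\mathbb{F}_p$ in the $2n$ integer coordinates of $(x_1,x_2)$. Lifting to $\mathbb{Z}^{2n}$, the solutions form a lattice of determinant $p^n$, and we need to count its points in a box with sides $H_i$. Minkowski's second theorem, through the successive minima of the rescaled lattice, controls this count.

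The difficulty is the degenerate configurations, where a short vector forces $a_1/a_2$ to lie in a proper subfield, which here can only be $\mathbb{F}_p$. For $n=2,3$ the only intermediate subfield is $\mathbb{F}_p$ itself. The degenerate contribution therefore comes from $x\in\omega_n\mathbb{F}_p$-type lines, which is where the unbalanced sides matter. I would dispose of it as follows. If $\chi|_{\mathbb{F}_p}$ is nontrivial, a separate Burgess bound along the line controls these terms. Otherwise they are trivially bounded by $|B\cap\omega_n\mathbb{F}_p|$, which explains the extra term.

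Finally, I would choose $r\approx 1/\varepsilon$, $K=p^{n/(2r)}$, and the size of $A$ so that $|A||B|\approx p^{n}$ up to $p^{\varepsilon}$ factors. With $|B|\ge p^{n(1/4+\varepsilon)}$, the resulting inequality becomes $|S|\ll|B|p^{-\varepsilon^2/12}$ after optimizing.
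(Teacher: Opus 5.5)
There is a genuine gap. Your amplification--H\"older--Weil skeleton is the one the paper uses in its Case~1, following Gabdullin. But the step where you ``dispose of'' the degenerate configurations is not a legitimate move, and it is exactly where the difficulty of this statement lies.

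Those configurations are contributions to $\sum_u\nu(u)^2$, a nonnegative count inside H\"older's inequality. No character-sum bound along $\omega_n\mathbb{F}_p$ can remove them, and they are not caused by points of $B$ lying on $\omega_n\mathbb{F}_p$. Minkowski's theorem and the lattice-point bound require a symmetric body, so your count of $(x_1,x_2)\in B^2$ with $x_2=zx_1$ is really a count in the difference box. That box always contains $\omega_n(I_n-I_n)$. For every $z\in\mathbb{F}_p^*$ the count is therefore at least the number of $(x_n,y_n)\in[-H_n,H_n]^2$ with $y_n\equiv zx_n$, and these squares sum to $\gg H_n^4/p$. For $H_n\le\sqrt{p/2}$ this is harmless: Lemma~\ref{l2} gives $L_2\ll|B|^2(\log p)^n$, with no exceptional term at all. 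For $H_n>\sqrt p$, however, it exceeds $|B|^2$ by the factor $H_n^2/(pH_1^2\cdots H_{n-1}^2)$, which can be a full power of $p$.

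Concretely, take $n=2$ and $B=\{\omega_1+x_2\omega_2:\ x_2\in I_2\}$ with $|I_2|\asymp p$. Then $B\cap\omega_2\mathbb{F}_p=\emptyset$, yet your auxiliary set (with $a_1=0$ allowed) lies in $\omega_2\mathbb{F}_p$. Your energy bound is then at least of order $|A|^2|B|^2/p$, and the final Burgess estimate becomes trivial. Meanwhile $\sum_{x\in B}\chi(x)=\chi(\omega_2)\sum_{t\in I_2}\chi(g+t)$ with $g=\omega_1/\omega_2\notin\mathbb{F}_p$, and no bound for $\chi|_{\mathbb{F}_p}$ says anything about that sum.

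What is missing is the long-side step the paper takes from Gabdullin (its Case~3).
\begin{itemize}
\item The energy lemma is only invoked when $H_n\le\sqrt{p/2}$.
\item Boxes with $\sqrt{p/2}<H_n<p^{1/2+\varepsilon/2}$ are cut into such boxes, at the cost of replacing $\varepsilon$ by $\varepsilon/2$.
\item For $H_n>p^{1/2+\varepsilon/2}$ the Burgess argument is abandoned. One writes $x=\omega_n(g_{x'}+x_n)$ with $g_{x'}=\sum_{j<n}x_j\omega_j/\omega_n$. Katz's bound $\sum_{t\in I}\chi(g+t)\ll\sqrt p\log p$ (Proposition~\ref{KT}) then applies to every row with $g_{x'}\notin\mathbb{F}_p$; for $n=2,3$ such $g_{x'}$ automatically generates $\mathbb{F}_{p^n}$.
\item The only remaining row is $B\cap\omega_n\mathbb{F}_p$. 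It is bounded by P\'olya--Vinogradov when $\chi|_{\mathbb{F}_p}\neq\chi_0$ and trivially otherwise, and this is the true origin of the extra term.
\end{itemize}
A purely Burgess route would instead need an energy bound for the translated box itself, after first excising $B\cap\omega_n\mathbb{F}_p$. In the example above the unsymmetrized energy is indeed at most $2|B|^2$, but that is a different lemma from the symmetric lattice count you describe.

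Your parameters also need repair.
\begin{itemize}
\item With $\lambda=p^{-\varepsilon/2}$, $r\approx1/\varepsilon$ and $K=p^{n/(2r)}\approx p^{n\varepsilon/2}$, the shift error $\lambda K|B|$ exceeds $|B|$.
\item The target $|A||B|\approx p^n$ is unattainable when $|B|$ is near $p^{n(1/4+\varepsilon)}$, since $|A|\le\lambda^n|B|$. What the Weil term actually requires is $|A||B|\ge p^{n/2+c\varepsilon}$.
\item $A$ must allow $a_i=0$; otherwise it is empty whenever $\lambda H_i<1$.
\end{itemize}
Taking $r\approx n/\varepsilon$, $K\approx p^{\varepsilon/2}$ and $\lambda\approx p^{-\varepsilon}$, as in the paper's Case~1, fixes these.
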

In the main theorem \ref{MT}, we extend Gabdullin's work to an arbitrary $n\in \mathbb{N}\setminus\{1\}$.
Next, we record the following remarks of our Main Theorem \ref{MT}:
\begin{remark}
If $\chi$ is induced from the trivial character on a proper subfield 
$\mathbb{F}_{p^{n/d}} \subsetneq \mathbb{F}_{p^n}$ and 
$B \subseteq \mathbb{F}_{p^{n/d}}$, then
\[
\sum_{x\in B} \chi(x)=|B|,
\]
and so no nontrivial cancellation occurs. Since $|B|> p^{n/4}$, this forces $d< 4$. This explains the necessity of imposing a non-triviality condition on the restriction of $\chi$ to proper sub-fields in Theorem~\eqref{MT}.
\end{remark}
\begin{remark}
Arguing as in the proof of Theorem~\ref{MT}, one can also obtain nontrivial cancellation for 
$|B|\geq p^{n/3+\varepsilon}$
under the weaker condition that 
\[
\chi\!\restriction_{\mathbb{F}_{p^{n/2}}} \neq \chi_0.
\]
In view of Remark~1, this condition is essentially necessary in this range. Indeed, if 
\(\chi\!\restriction_{\mathbb{F}_{p^{n/2}}} = \chi_0\), then no cancellation of the character occurs if the box $B$ is contained in $\mathbb{F}_{p^{n/2}}$.
\end{remark}

\subsection{Outline of the proof of main theorem.} The proof of the Main Theorem closely follows the approach of \cite{GB}, employing techniques from \cite{C1}, \cite{K} and \cite{Kon} to estimate character sums over boxes in finite fields whose side lengths are at most $\sqrt{p/2}$. Our argument begins with the classical additive shifting technique $x\mapsto x+yz$ introduced by Burgess \cite{B2}. Applying H\"older's inequality twice to the resulting sum of shifted characters produces three terms: one is related to the multiplicative energy of the box, while the remaining terms can be bounded using Weil's bound and trivial estimates. Consequently, achieving a nontrivial saving for the character sum reduces to obtaining a bound on the multiplicative energy of a set in $\mathbb{F}_{p^n}$ which are established in Lemmas \ref{l1} and Lemma \ref{l2}. More specifically, we employ tools from the geometry of numbers, in particular the concepts of successive minima and Minkowski's second theorem, together with an auxiliary lemma that provides upper and lower bounds for the product of successive minima which turns out to be an essential ingredient in our energy estimate.

\medskip

On the other hand, for boxes whose side lengths exceed $p^{1/2+\varepsilon}$, we are again able to derive a non-trivial upper bound for the associated character sum where the character is nontrivial in a proper subfield of $\mathbb{F}_{p^n}$. The key ingredient in this case is the Proposition~\cite{Katz} of N.\,M. Katz concerning the complete character sums. Under the standing assumption that the larger side length of the box exceeds $p^{1/2+\varepsilon}$, we obtain a non-trivial saving for the character sum along that direction. Estimating the remaining summations trivially then yields an overall non-trivial saving over the volume of the box. In addition, the Pólya--Vinogradov inequality in finite fields is invoked to complete the argument in this setting. For further details, see \textbf{Case~3} in the proof of the main theorem. 

\medskip

{\bf Acknowledgments.} The author expresses his sincere gratitude to Professor Stephan Baier for his valuable conversations. The author thanks the Ramakrishna Mission Vivekananda Educational and Research Institute for an excellent work environment. The research of the author was supported by a CSIR Ph.D. fellowship under file number 09/0934(13170)/2022-EMR-I. 
\section{Notations and Preliminaries}
We begin by defining multiplicative energy of a set which will be required to estimate the character sum. In the multiplicative setting, the energy counts the number of solutions to the equation $ab = cd$ with variables drawn from a given set.
\begin{Definition}[Multiplicative energy]
Let $A$ be a finite subset of a commutative ring (or field).
The multiplicative energy of $A$ is defined by
\[
E(A)
=
\bigl|\{(a,b,c,d)\in A^4 : ab = cd\}\bigr|.
\]
\end{Definition}
\begin{Definition}[Successive minima]
Let $\Lambda$ be a lattice in $\mathbf{R}^d$ of rank $k$, and let
$B$ be a convex body in $\mathbf{R}^d$. We define the $j-$th successive minima $\lambda_j = \lambda_j(B, \Lambda)$ for
$1\leq  j \leq k$ of $B$ with respect to $\Lambda$ as
\[
\lambda_j := \inf \left\{ \lambda > 0 \;:\; 
\lambda B \text{ contains } j \text{ linearly independent elements of } \Lambda 
\right\}.
\]
Note that $0 < \lambda_1 \leq\ldots \leq \lambda_k < \infty$.
\end{Definition}
\begin{Proposition}[Minkowski’s second theorem]\label{Mkow}
    Let $\Lambda$ be a lattice of full rank in
$\mathbf{R}^d$ , and let $B$ be an symmetric convex body in $\mathbf{R}^d$ , with successive minima $0 < \lambda_1 \leq \ldots \leq \lambda_d$. Then there exists $d$ linearly independent vectors $v_1,\ldots,v_d \in\mathbf{R}^d$
with the following properties:
\begin{itemize}
\item For each $1 \le j \le d$, the vector $v_j$ lies on the boundary of $\lambda_j B$, and $\lambda_j B$ does not contain lattice points of $\Lambda$ outside the span of $v_1,\ldots,v_{j-1}$.
\item \[
\frac{2^d}{d!}\,\operatorname{\mu}(\mathbf{R}^d/\Lambda)
\le
\lambda_1 \lambda_2 \cdots \lambda_d \, \operatorname{\mu}(B)
\le
2^d \operatorname{\mu}(\mathbf{R}^d/\Lambda).\]
where $\mu$ is the Lebesgue measure.
\end{itemize}
\end{Proposition}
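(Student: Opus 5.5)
The statement is Minkowski's second theorem. It is classical, so in the paper it would simply be quoted. If I were to prove it, I would proceed as follows.

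\textbf{Existence of $v_1,\ldots,v_d$.} Since $\Lambda$ is discrete and $B$ is bounded, each set $\lambda B\cap\Lambda$ is finite. Hence each infimum defining $\lambda_j$ is attained. I choose the $v_j$ inductively: take $v_j\in\lambda_jB\cap\Lambda$ not in $V_{j-1}:=\operatorname{span}(v_1,\ldots,v_{j-1})$. Such a $v_j$ exists because $\lambda_jB$ contains $j$ independent lattice vectors, so they cannot all lie in the $(j-1)$-dimensional space $V_{j-1}$. If $v_j$ lay in the interior of $\lambda_jB$, it would lie in $\lambda' B$ for some $\lambda'<\lambda_j$. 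Then $\lambda' B$ would contain the $j$ independent vectors $v_1,\ldots,v_j$ (using $v_i\in\lambda_iB\subseteq\lambda' B$ for $i<j$ once $\lambda'\ge\lambda_{j-1}$), contradicting minimality. The same minimality argument shows that any lattice point of $\lambda_jB$ outside $V_{j-1}$ lies on the boundary. I would phrase the first bullet in this form, since a literal ``no lattice points outside $V_{j-1}$'' fails when several minima coincide.

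\textbf{Lower bound.} By convexity and central symmetry, the cross-polytope $\operatorname{conv}(\pm v_j/\lambda_j)$ is contained in $B$. Its volume is
\[
\frac{2^d}{d!}\,\frac{|\det(v_1,\ldots,v_d)|}{\lambda_1\cdots\lambda_d}.
\]
Also $|\det(v_1,\ldots,v_d)|\ge\mu(\mathbf{R}^d/\Lambda)$, because the $v_j$ generate a sublattice of $\Lambda$. Combining these gives $\frac{2^d}{d!}\mu(\mathbf{R}^d/\Lambda)\le\lambda_1\cdots\lambda_d\,\mu(B)$.

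\textbf{Upper bound.} I would follow Davenport's compression argument.
\begin{enumerate}
\item Each $\Lambda\cap V_j$ is a primitive sublattice. So I can choose coordinates (a Gram--Schmidt frame adapted to the flag $V_1\subset\cdots\subset V_d$) in which $V_j$ is spanned by the first $j$ coordinate directions.
\item The key packing fact: if $x,y\in\operatorname{int}(\tfrac{\lambda_j}{2}B)$ and $x-y\in\Lambda$, then $x-y\in V_{j-1}$. Indeed $x-y$ lies in the interior of $\lambda_jB$, so it lies in $\lambda' B$ for some $\lambda'<\lambda_j$, and by the first bullet such a lattice point must lie in $V_{j-1}$.
\item Construct a continuous, piecewise volume-scaling map $\phi$ on $\operatorname{int}(\tfrac{\lambda_d}{2}B)$. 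It acts on the coordinates of $V_j\ominus V_{j-1}$ so that, successively for $j=d,d-1,\ldots$, the coordinates in the $(d-j)$ top directions are rescaled by $\lambda_j/\lambda_{j+1}$.
\item Show that $\phi$ preserves the property ``no two points of the image differ by a nonzero lattice vector''. This is done by induction on $j$, using the packing fact at level $j$ and the triangular structure of the coordinates.
\item Conclude that the image packs modulo $\Lambda$, so its volume is at most $\mu(\mathbf{R}^d/\Lambda)$. The image has volume $\prod_j(\lambda_j/\lambda_d)\cdot(\lambda_d/2)^d\mu(B)=2^{-d}\lambda_1\cdots\lambda_d\,\mu(B)$, which is exactly the upper bound.
\end{enumerate}

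\textbf{Main obstacle.} The hard step is the upper bound, specifically the inductive verification in item 4 that the compression keeps translates disjoint modulo $\Lambda$. The lower bound and the existence of the $v_j$ are routine. For item 4 I would first try proving disjointness level by level, projecting onto $\mathbf{R}^d/V_{j-1}$ and using that the map is affine on the fibres of that projection.
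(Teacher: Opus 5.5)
The paper gives no proof of this statement; it only cites \cite[Theorem 3.30]{TV}, as you expected. Your existence argument and your lower bound via $\operatorname{conv}(\pm v_j/\lambda_j)\subseteq B$ are fine, with two small corrections. First, the literal first bullet fails for every $j$, not only when minima coincide. The reason is that $v_j$ itself lies in $\lambda_jB$ and outside $V_{j-1}$. The correct statement is $\operatorname{int}(\lambda_jB)\cap\Lambda\subseteq V_{j-1}$, which is what you actually prove. Second, when $\lambda_{j-1}=\lambda_j$ there is no $\lambda'$ with $\lambda_{j-1}\le\lambda'<\lambda_j$. In that case argue with the least $i$ such that $\lambda_i=\lambda_j$ and the vectors $v_1,\dots,v_{i-1},w$.

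The upper bound has a genuine gap. Item 4 is the entire content of the proof, and item 3 does not describe a map for which item 4 holds.

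\begin{itemize}
\item \emph{The count is off.} Compressing $d-j$ directions by $\lambda_j/\lambda_{j+1}$ at step $j$ gives a total factor $\lambda_1^d/(\lambda_1\cdots\lambda_d)$. For $d\ge3$ this is in general not the factor $\prod_j\lambda_j/\lambda_d$ used in item 5. At step $j$ one must compress the $j$ directions of $V_j$.
\item \emph{A coordinate rescaling does not preserve packing.} Take $\Lambda=\mathbb{Z}^2$ and $B$ the parallelogram with vertices $\pm(3/2,0),\pm(9/4,3/2)$. Then $\lambda_1=2/3$ (attained at $e_1$), $\lambda_2=1$ (attained at $(1,1)$ and $(2,1)$), and $V_1=\mathbf{R}e_1$. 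Yet $\operatorname{diag}(2/3,1)$ sends $\pm(3/4,1/2)\in\frac12\operatorname{int}B$ to two points differing by $(1,1)\in\Lambda$.
\item \emph{Projection does not give cross-fibre disjointness.} The same example shows that projecting to $\mathbf{R}^d/V_{j-1}$ cannot work: the heights of $\frac12\operatorname{int}B$ fill $(-3/4,3/4)$, an interval of length greater than $1$.
\end{itemize}

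The missing idea is a fibrewise compression. Put $T_d=\frac{\lambda_d}{2}\operatorname{int}B$. Obtain $T_j$ from $T_{j+1}$ by applying, on each section $F=T_{j+1}\cap(V_j+w)$, the homothety of ratio $\lambda_j/\lambda_{j+1}$ centred at a point of $F$ (for example its centroid). By induction, two facts hold at each stage. Each such $F$ is a translate of a section of $\frac{\lambda_{j+1}}{2}\operatorname{int}B$; this propagates because a homothety of $V_j+w$ carries $V_{j-1}$-sections to $V_{j-1}$-sections. Also, lattice differences of points of $T_{j+1}$ lie in $V_j$. These give the two steps:

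\begin{enumerate}
\item[(a)] The compressed section lies inside the convex set $F$, so $T_j\subseteq T_{j+1}$. Hence two points of $T_j$ that differ by a lattice vector lie in the same section.
\item[(b)] Differences within a compressed section lie in $\frac{\lambda_j}{\lambda_{j+1}}\bigl(\lambda_{j+1}\operatorname{int}B\cap V_j\bigr)=\lambda_j\operatorname{int}B\cap V_j$. The lattice points of this set lie in $V_{j-1}$.
\end{enumerate}

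Hence $T_1$ packs modulo $\Lambda$. Fubini then gives $\operatorname{vol}(T_1)=\prod_{j=1}^{d-1}(\lambda_j/\lambda_{j+1})^j\operatorname{vol}(T_d)=2^{-d}\lambda_1\cdots\lambda_d\,\mu(B)$, which is your item 5.
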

\begin{proof}
    See \cite[Theorem 3.30]{TV} for a proof.
\end{proof}
We recall the following lemma which provides an estimate for the number of points of the lattice $\Lambda$ which lies inside the convex body $B$ with respect to the successive minima defined above.
\begin{Lemma}\label{LCE}
     Let $\Lambda$ be a lattice of full rank in
$\mathbf{R}^d,$ and let $B$ be a symmetric convex body in $\mathbf{R}^d$ as before, with successive minima $0 < \lambda_1 \leq \ldots \leq \lambda_d$. Then
\begin{equation*}
\left|\Lambda\cap B\right|\ll \prod_{j=1}^d\max\{1,\lambda_j^{-1}\}    
\end{equation*}
\end{Lemma}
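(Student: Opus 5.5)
The plan is to use the first part of Minkowski's second theorem (Proposition~\ref{Mkow}) to build a basis-like system of lattice vectors adapted to $B$. Then I will count lattice points in $B$ by a covering argument. Let $v_1,\dots,v_d$ be the linearly independent vectors given there, with $v_j\in\partial(\lambda_j B)\cap\Lambda$. Put $k=\#\{j:\lambda_j\le 1\}$. If $k=0$, then $B$ contains no nonzero lattice point, so $|\Lambda\cap B|=1$ and the bound is trivial. Otherwise, the first bullet of Proposition~\ref{Mkow} (applied at index $k+1$, using $\lambda_{k+1}>1$) shows that every point of $\Lambda\cap B$ lies in the subspace $V=\mathrm{span}(v_1,\dots,v_k)$. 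So the problem reduces to counting points of the rank-$k$ lattice $\Lambda'=\Lambda\cap V$ inside $B\cap V$.

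Inside $V$, I will introduce the auxiliary parallelepiped
\[
P=\Bigl\{\sum_{j=1}^k t_j v_j : |t_j|\le \tfrac{1}{2k\lambda_j}\Bigr\}.
\]
Since $v_j/\lambda_j\in B$ and $B$ is convex and symmetric, every point of $P$ lies in $\tfrac12 B$. I will then run a packing argument. The translates $x+P'$, with $P'=\{\sum t_jv_j: |t_j|\le \tfrac{1}{4k\lambda_j}\}$ and $x$ ranging over $\Lambda'\cap B$, are essentially disjoint. This needs care: two lattice points $x\ne y$ in $B$ could have $x-y\in 2P'$. The cleaner route is to partition $\Lambda'\cap B$ according to the coset of the sublattice $\Lambda''=\bigoplus_j \mathbb{Z}v_j$ together with the cell of the grid $\{\sum t_jv_j\}$ of mesh $\tfrac{1}{2k\lambda_j}$ in coordinate $t_j$. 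Points of $\Lambda'$ lying in the same grid cell differ by an element of $P-P\subseteq B$, and that element lies in $V$. Hence each cell contains at most $|\Lambda'\cap B|$-controlled many points. To avoid this circularity, I will instead use the fact that a difference lying in the small box $\tfrac{1}{2k}\sum[-1,1]v_j/\lambda_j$ is a lattice vector of size less than $\lambda_1$ in the $B$-norm, and so is zero. This needs the grid mesh to be shrunk by a factor $\lambda_1$. That shrinking is harmless only after rescaling, and it is the main subtlety, discussed below.

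The counting itself is as follows. $B\cap V\subseteq \sum_j [-c_j,c_j]v_j$ for suitable $c_j$, and I need $c_j\ll_d 1$ in the $t$-coordinates. This holds because the $v_j$ are independent and $B$ is bounded, but the implied constant is not uniform. To get a uniform constant I will first replace $v_1,\dots,v_k$ by a genuine basis $w_1,\dots,w_k$ of $\Lambda'$ with $\|w_j\|_B\le j\lambda_j$; the standard reduction (Mahler / Tao--Vu Lemma 3.33) provides one. In these coordinates, every $x\in\Lambda'\cap B$ is $x=\sum m_j w_j$ with integers $m_j$. The coefficients satisfy $|m_j|\ll_d \lambda_j^{-1}$, by the volume comparison from the second bullet of Proposition~\ref{Mkow} applied within $V$. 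Concretely, via Cramer's rule, $|m_j|$ is a ratio of the volume of a parallelepiped with $w_j$ replaced by $x$ to $\mu(V/\Lambda')$, and the upper bound in Minkowski's theorem controls this ratio by $\prod_{i\ne j}\lambda_i\cdot 1\big/\prod_i\lambda_i\cdot O_d(1)$. Hence the number of choices is $\prod_{j\le k}(2C_d\lambda_j^{-1}+1)\ll\prod_{j}\max\{1,\lambda_j^{-1}\}$.

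The main obstacle is the coefficient bound $|m_j|\ll_d\lambda_j^{-1}$ with a constant depending only on $d$. It requires both the existence of a basis of $\Lambda'$ comparable to the successive minima and a volume inequality of the form $\mu(\mathrm{span\ parallelepiped})\ll_d \prod\|w_i\|_B\cdot\mu(B\cap V)$. My first attempt would be to derive the latter from John's theorem, which makes $B\cap V$ an ellipsoid up to a factor $d$. For an ellipsoid, Hadamard's inequality gives it immediately, and the Cramer's rule computation then closes the argument.
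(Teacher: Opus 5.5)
Your final argument (reduced basis plus Cramer's rule) is correct. It is a genuinely different route from the paper, which gives no proof and only cites \cite{BHW} and \cite{TV}.

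Stripped to essentials, your proof has three steps:
\begin{itemize}
\item You reduce to the rank-$k$ lattice $\Lambda'=\Lambda\cap V$. The cleanest justification is direct: a point of $\Lambda\cap B$ outside $V$, together with $v_1,\dots,v_k$, would give $k+1$ independent lattice vectors in $B$, forcing $\lambda_{k+1}\le 1$.
\item The successive minima of $\Lambda'$ with respect to $K=B\cap V$ are exactly $\lambda_1,\dots,\lambda_k$. You should say this explicitly, since both the reduced-basis lemma and Minkowski's theorem are applied inside $V$.
\item By Cramer's rule, $|m_j|=|\det(w_1,\dots,x,\dots,w_k)|/\mu(V/\Lambda')$. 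The denominator is at least $2^{-k}\lambda_1\cdots\lambda_k\,\mu(K)$ by the upper inequality in Proposition~\ref{Mkow}. The numerator is $\ll_k\prod_{i\ne j}\lambda_i\,\mu(K)$. Hence $|m_j|\ll_d\lambda_j^{-1}$, and the count follows because $\lambda_j\le 1$ for $j\le k$.
\end{itemize}

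Two simplifications are available.
\begin{itemize}
\item John's theorem is unnecessary for the numerator. For $u_1,\dots,u_k\in V$, the cross-polytope $\mathrm{conv}\{\pm u_i/\|u_i\|_B\}$ lies in $K$, so $|\det(u_1,\dots,u_k)|\le\frac{k!}{2^k}\prod_i\|u_i\|_B\,\mu(K)$.
\item The same two volume bounds show $[\Lambda':\bigoplus_j\mathbb{Z}v_j]\le k!$. So you could skip the reduced basis and work with the $v_j$ directly.
\end{itemize}

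As for what each approach buys: the cited results give an explicit constant depending only on $d$. Your argument is self-contained given Minkowski's second theorem and the reduced-basis lemma. It also yields the structural fact that $\Lambda\cap B$ lies in the generalized progression $\{\sum_j m_jw_j:|m_j|\ll_d\lambda_j^{-1}\}$. The paper only needs an $O_d(1)$ constant, so either route suffices.

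You should delete the packing/grid paragraph, which is a dead end. Its assertion that $B\cap V\subseteq\sum_j[-c_j,c_j]v_j$ with $c_j\ll_d 1$ is false as written. Since $v_j/\lambda_j\in B$, you must have $c_j\ge\lambda_j^{-1}$, which can be large. The correct size is $c_j\asymp_d\lambda_j^{-1}$, which is exactly what your Cramer's-rule step establishes.
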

\begin{proof}
    This is a standard lattice counting inequality. See  \cite[Proposition 2.1]{BHW} or  \cite[Exercise 3.5.6]{TV} for details of the proof.
\end{proof}
We shall use the above lemma to obtain lower bounds for our successive minima, which will be required to get an effective estimate for the multiplicative energy of the desired set. 

\medskip

The polar lattice $L^*$ of a lattice $L\subset \mathbb{R}^d$ and the polar body $D^*$ of a convex symmetric body $D\subset \mathbb{R}^d$ are defined by
\begin{equation}\label{LD}
L^*=\{x\in \mathbb{R}^d:\ \langle x,y\rangle\in \mathbb{Z} \text{ for all } y\in L\}, 
\qquad 
D^*=\{x\in \mathbb{R}^d:\ \langle x,y\rangle\le 1 \text{ for all } y\in D\}.
\end{equation}
where, the usual inner product is defined as $\langle x,y\rangle:= x_1 y_1+x_2y_2+\cdots+x_dy_d$ with $x=(x_1,\ldots,x_d)$ and $y=(y_1,\ldots,y_n).$

\medskip
The following result, due to Mahler, relates the successive minima of $L$ with respect to $D$ and those of $L^*$ with respect to $D^*$.

\begin{Lemma}\label{Mah}
Let $L\subset \mathbb{R}^d$ be a lattice and let $D\subset \mathbb{R}^d$ be a symmetric convex body. Let $\lambda_1,\ldots,\lambda_d$ and $\lambda_1^*,\ldots,\lambda_d^*$ denote the successive minima of $L$ with respect to $D$ and of $L^*$ with respect to $D^*$, respectively. Then, for each $1\le i\le d$, we have
\[
\lambda_i \lambda_{d-i+1}^* \ll_d 1.
\]
\end{Lemma}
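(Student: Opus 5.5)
The plan is to prove Mahler's inequality $\lambda_i\lambda^*_{d-i+1}\ll_d 1$ by combining the duality pairing between $L$ and $L^*$, which gives a lower bound for each product, with Minkowski's second theorem (Proposition~\ref{Mkow}) applied to both $(L,D)$ and $(L^*,D^*)$, which bounds the product of all of them from above. We take $L$ of full rank, as is implicit in the statement.

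\emph{Step 1: the lower bound $\lambda_i\lambda^*_{d-i+1}\ge 1$.} Choose linearly independent $u_1,\dots,u_i\in L$ with $u_j\in\lambda_i D$. Choose linearly independent $w_1,\dots,w_{d-i+1}\in L^*$ with $w_k\in\lambda^*_{d-i+1}D^*$. The span of the $u_j$ has dimension $i$ and the span of the $w_k$ has dimension $d-i+1$, and $i+(d-i+1)>d$. So some $w_k$ is not orthogonal to some $u_j$. By the definition of $L^*$ in \eqref{LD}, the number $\langle u_j,w_k\rangle$ is then a nonzero integer, so $|\langle u_j,w_k\rangle|\ge 1$. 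By the definition of $D^*$ and homogeneity, $|\langle u_j,w_k\rangle|\le \lambda_i\lambda^*_{d-i+1}$. Symmetry of $D$ is used here to control the absolute value. Hence $\lambda_i\lambda^*_{d-i+1}\ge 1$ for every $i$.

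\emph{Step 2: the product bound.} Apply Proposition~\ref{Mkow} to $(L,D)$ and to $(L^*,D^*)$. This gives
\[
\prod_{i}\lambda_i\le \frac{2^d\mu(\mathbf{R}^d/L)}{\mu(D)},\qquad \prod_i\lambda_i^*\le \frac{2^d\mu(\mathbf{R}^d/L^*)}{\mu(D^*)}.
\]
We use $\mu(\mathbf{R}^d/L^*)=\mu(\mathbf{R}^d/L)^{-1}$, which holds because the dual basis matrix is the inverse transpose. Multiplying the two bounds, we get
\[
\prod_i\lambda_i\lambda^*_{d-i+1}\le \frac{4^d}{\mu(D)\mu(D^*)}.
\]
To finish this step we need the lower bound $\mu(D)\mu(D^*)\gg_d 1$ for the volume product. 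For this I would invoke the elementary inequality $\mu(D)\mu(D^*)\ge 4^d/d!$ (Mahler). One proof applies a linear map making $D$ sandwiched between a cross-polytope and a cube via John's ellipsoid. An alternative uses that $D\supseteq\mathrm{conv}(\pm e_i)$ after an affine normalization. Either route gives a constant depending only on $d$. I expect this volume-product lower bound to be the main obstacle, since it is the only input not already stated in the paper. Using John's theorem ($E\subseteq D\subseteq\sqrt d E$ for an ellipsoid $E$), we get $\mu(D)\mu(D^*)\ge d^{-d/2}\mu(E)\mu(E^*)=d^{-d/2}\mu(\mathrm{Ball})^2$. This is already $\gg_d 1$ and suffices.

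\emph{Step 3: conclusion.} We now have $\prod_{k}\lambda_k\lambda^*_{d-k+1}\ll_d 1$, and by Step 1 every factor is at least $1$. Therefore each single factor satisfies $\lambda_i\lambda^*_{d-i+1}\le\prod_k\lambda_k\lambda^*_{d-k+1}\ll_d 1$, which is the claim.
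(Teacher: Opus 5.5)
Your proof is correct, but it takes a different route from the paper. The paper gives no argument and simply cites Banaszczyk \cite[Proposition~3.6]{Ban}; you instead reprove the classical Mahler transference from scratch. Your Step~1 gives $\lambda_i\lambda^*_{d-i+1}\ge 1$: a non-orthogonal pair $u_j,w_k$ is forced by $i+(d-i+1)>d$, and then $\langle u_j,w_k\rangle\in\mathbb{Z}\setminus\{0\}$. Step~2 applies Proposition~\ref{Mkow} to $(L,D)$ and $(L^*,D^*)$ and uses $\mu(\mathbf{R}^d/L^*)=\mu(\mathbf{R}^d/L)^{-1}$, which bounds the whole product by $4^d/(\mu(D)\mu(D^*))$. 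A volume-product lower bound then closes the argument.

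Banaszczyk's theorem buys a far better dependence on the dimension, a bound polynomial in $d$ (of order $d\log d$). That gain is irrelevant here, because $d=2n$ is fixed and implied constants may depend on $n$. Your version buys self-containedness: it reuses only the Minkowski theorem already stated in the paper, and it also records the matching lower bound $\lambda_i\lambda^*_{d-i+1}\ge 1$.

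One correction. The inequality $\mu(D)\mu(D^*)\ge 4^d/d!$ is not an elementary result of Mahler; it is Mahler's conjecture, which is open in general for $d\ge 4$. What Mahler proved elementarily is $\mu(D)\mu(D^*)\ge 4^d/(d!)^2$. The proof takes a maximal-volume cross-polytope $C$ inscribed in $D$, which yields $C\subseteq D\subseteq P$ for the associated parallelepiped $P$. Containing $C$ alone, as in your ``alternative'', only bounds $\mu(D)$. Since you finally rely on John's theorem, giving $\mu(D)\mu(D^*)\ge d^{-d/2}\mu(\mathrm{Ball})^2$, the proof is unaffected.

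Note also that in the paper's application $D$ is a box and $D^*$ is the corresponding weighted cross-polytope. So $\mu(D)\mu(D^*)=4^{2n}/(2n)!$ can be computed directly, and no volume-product theorem is needed there at all.
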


\begin{proof}
See \cite[Proposition 3.6]{Ban}.
\end{proof}
\medskip

To estimate the character sums arising in our arguments, we shall rely on classical bounds due to Weil. These results provide square-root cancellation for multiplicative character sums over finite fields and play a central role in controlling higher moments of short character sums. \\
We first recall Weil’s bound for character sums of polynomial arguments, and then derive a consequence adapted to incomplete sums over intervals, which will be used repeatedly in what follows.
\begin{Proposition}\label{WE}[Weil] Let $\chi$ be a multiplicative character of $\mathbb{F}_{p^n}$ of order $d>1$. Assume that a polynomial $f\in \mathbb{F}_{p^n}[x]$ has $m$ distinct roots and is not a $d-$th power. Then
\[\left|\sum_{x\in \mathbb{F}_{p^n}}\chi(f(x))\right|\leq (m-1)p^{n/2}.\]
\end{Proposition}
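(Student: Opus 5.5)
The plan is to deduce Proposition~\ref{WE} from the Riemann hypothesis for curves over finite fields, through the $L$-function attached to the character sum; the paper cites it as a classical result. Write $q=p^n$ and factor
\[
f=c\prod_{i=1}^{m}(x-a_i)^{e_i}
\]
over $\overline{\mathbb{F}}_q$, with distinct $a_i$. The hypothesis should be read in its standard form, namely that $f$ is not of the form $c\,g^d$. (If every $e_i$ were divisible by $d$ and only $c$ failed to be a $d$-th power, the sum would be $\chi(c)(q-m)$, which is not small.) So we may assume $d\nmid e_i$ for some $i$. First reduce every $e_i$ modulo $d$: this changes $\chi(f(x))$ only at the roots of $f$, where both values are $0$. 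After the reduction, the relevant twisted characters $\chi^{e_i}$ are nontrivial for at least one $i$.

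Next, for $\nu\ge1$ define
\[
S_\nu=\sum_{x\in\mathbb{F}_{q^\nu}}\chi\bigl(N_{\mathbb{F}_{q^\nu}/\mathbb{F}_q}(f(x))\bigr),
\qquad
L(T)=\exp\Bigl(\sum_{\nu\ge1}S_\nu T^\nu/\nu\Bigr).
\]
The first step is to show that $L(T)$ is a polynomial of degree at most $m-1$. The standard route expands $L(T)$ as an Euler product over monic irreducible polynomials $P$. The map $P\mapsto\chi(\mathrm{Res}(P,f))$ is then a character of $(\mathbb{F}_q[x]/(\prod_i(x-a_i)))^{*}$ extended to monic polynomials, so it is a Dirichlet character of conductor dividing $\prod(x-a_i)$. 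It is nontrivial because of the reduction above. Grouping monic polynomials by degree $k$, the coefficient of $T^k$ in $L(T)$ is a sum of a nontrivial character over all monic polynomials of degree $k$. This sum vanishes once $k\ge m$, since such polynomials run over complete residue systems modulo a polynomial of degree $m$. Hence $L(T)=\prod_{j=1}^{m-1}(1-\alpha_jT)$ with some $\alpha_j$ possibly zero, and comparing coefficients of $T$ gives $S_1=-\sum_j\alpha_j$.

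The remaining and main step is to show $|\alpha_j|\le q^{1/2}$. Then $|S_1|\le(m-1)q^{1/2}=(m-1)p^{n/2}$, which is the claim. For this I would identify $L(T)$ with a factor of the zeta function of the smooth projective model $C$ of the curve $y^d=f(x)$. The Galois group $\mu_d$ of $C\to\mathbb{P}^1$ decomposes the point counts $\#C(\mathbb{F}_{q^\nu})$ into sums over the characters $\chi^k$, with $\chi$ of order $d$, and $L(T)$ is one of these isotypic pieces of the numerator of $Z_C(T)$. Weil's Riemann hypothesis for $C$ says every reciprocal zero of that numerator has modulus $q^{1/2}$, so every nonzero $\alpha_j$ has modulus exactly $q^{1/2}$.

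This last input is the real obstacle, and I would quote it rather than reprove it. If a self-contained argument were wanted, I would use Stepanov's polynomial method as presented in Schmidt's book. It gives $|S_\nu|\ll_m q^{\nu/2}$ for all $\nu$, which bounds every coefficient of the logarithmic derivative of $L(T)$, so each $|\alpha_j|\le q^{1/2}$. Combined with $S_1=-\sum_j\alpha_j$, this yields the sharp constant $m-1$.
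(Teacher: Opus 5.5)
The paper gives no argument for this proposition; it only cites Schmidt's Theorem~2C$'$. Your fallback route is essentially the proof in that reference. It shows that $L(T)$ is a polynomial of degree at most $m-1$, because its coefficients are sums of a nontrivial character modulo $\prod_i(x-a_i)$ over complete residue systems. It then bounds $|S_\nu|\ll q^{\nu/2}$ by Stepanov's method and reads off $|\alpha_j|\le q^{1/2}$ from the logarithmic derivative. That route is elementary, and the $L$-function is what upgrades an unspecified constant to the sharp $m-1$. Your primary route, through the zeta function of $y^d=f(x)$ and Weil's Riemann hypothesis, is Weil's original geometric argument. It is shorter, but it rests on much heavier input and needs more bookkeeping (see below). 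Your reading of the hypothesis as ``$f$ is not of the form $c\,g^d$'' is correct and necessary. Taken literally the statement is false: $f=cx^d$ with $\chi(c)\neq 1$ gives a sum of modulus $q-1$ against the bound $0$. This does not affect the paper, since the polynomials $F_{\mathbf z}$ in Lemma~\ref{WE2} are monic.

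One step of the primary route is wrong as written. $L(T)$ is not in general an isotypic piece of the numerator of $Z_C(T)$, and it is false that every nonzero $\alpha_j$ has modulus exactly $q^{1/2}$. Your Euler product omits places where $\chi(f)$ is unramified, namely $\infty$ when $d\mid\deg f$ and any $a_i$ with $d\mid e_i$. Each such place contributes reciprocal roots of modulus $1$. For instance, with $\chi$ quadratic and $f=x(x-1)$ one has $S_\nu=-1$ for every $\nu$, so $L(T)=1-T$ and $\alpha_1=1$, while $y^2=x^2-x$ has genus $0$.

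What the Riemann hypothesis actually gives is that each nonzero $\alpha_j$ has modulus $q^{1/2}$ or $1$. This still yields $|\alpha_j|\le q^{1/2}$ and hence the bound, but the identification must be corrected. You must also first pass to $\gcd(d,e_1,\ldots,e_m)=1$ by writing $f=c\,g^{\delta}$ and replacing $\chi$ by $\chi^{\delta}$. Otherwise $y^d=f(x)$ is geometrically reducible and has no single smooth projective model.

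Two smaller points. First, reducing the $e_i$ modulo $d$ does change the values at roots $a_i$ with $d\mid e_i$, since the reduced polynomial no longer vanishes there. The step is unnecessary anyway: $P\mapsto\chi\bigl(\prod_i P(a_i)^{e_i}\bigr)$ is already a character modulo $\prod_i(x-a_i)$, nontrivial exactly when $d\nmid e_i$ for some $i$. Second, $\chi(\mathrm{Res}(P,f))$ is this character times $\chi\bigl((-1)^{\deg f}c\bigr)^{\deg P}$. That twist does not affect the vanishing of the degree-$k$ sums for $k\ge m$.

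Your Stepanov route avoids all of the curve bookkeeping, since it only ever needs the inequality $|\alpha_j|\le q^{1/2}$.
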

\begin{proof}
    See  \cite[Theorem 2C']{Sch}.
\end{proof}
\begin{Lemma}\label{WE2}
Let $\chi$ be a multiplicative character of $\mathbb{F}_{p^n}$ of order $d>1$, and let $I \subset \mathbb{Z}$ be an interval. Then for every integer $r \ge 1$, we have
\[
\sum_{u \in \mathbb{F}_{p^n}}
\left|\sum_{z \in I} \chi(u+z)\right|^{2r}
\ll
2r\, p^{n/2} |I|^{2r}
\;+\;
p^n\, |I|^r.
\]
The implied constant is absolute.
\end{Lemma}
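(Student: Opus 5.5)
We expand the $2r$-th power and reduce to complete sums with polynomial arguments, to which Proposition~\ref{WE} applies. Writing $|S(u)|^{2r}=S(u)^r\overline{S(u)}^{\,r}$ with $S(u)=\sum_{z\in I}\chi(u+z)$, we get
\[
\sum_{u\in\mathbb{F}_{p^n}}|S(u)|^{2r}
=\sum_{z_1,\ldots,z_{2r}\in I}\ \sum_{u\in\mathbb{F}_{p^n}}
\chi\Bigl(\prod_{i=1}^{r}(u+z_i)\Bigr)\overline{\chi}\Bigl(\prod_{j=1}^{r}(u+z_{r+j})\Bigr).
\]
Here each integer $z_i$ is read in $\mathbb{F}_p\subset\mathbb{F}_{p^n}$. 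Since $\chi$ has order $d$, we have $\overline{\chi}(y)=\chi(y^{d-1})$ for $y\neq0$. Hence the inner sum equals $\sum_u\chi(f(u))$ with
\[
f(X)=\prod_{i\le r}(X+z_i)\prod_{j\le r}(X+z_{r+j})^{d-1},
\]
up to the at most $2r$ values of $u$ where some factor vanishes. Those values change each inner sum by $O(r)$, which is harmless next to $2r\,p^{n/2}$.

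Next we split the tuples $(z_1,\ldots,z_{2r})$ into good and bad ones, according to whether $f$ is a $d$-th power in $\mathbb{F}_{p^n}[X]$. For a good tuple, $f$ has at most $2r$ distinct roots, so Proposition~\ref{WE} bounds the inner sum by $(2r-1)p^{n/2}\le 2r\,p^{n/2}$. There are at most $|I|^{2r}$ good tuples, so they contribute at most $2r\,p^{n/2}|I|^{2r}$, which is the first term. For a bad tuple we bound the inner sum trivially by $p^n$, so it remains to show that the number of bad tuples is $\ll|I|^r$.

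To describe the bad tuples, fix a residue $\zeta$ and let $a(\zeta)=\#\{i\le r: z_i\equiv\zeta\}$ and $b(\zeta)=\#\{j\le r: z_{r+j}\equiv\zeta\}$. The root $-\zeta$ of $f$ has multiplicity $a+(d-1)b\equiv a-b \pmod d$. So $f$ is a $d$-th power exactly when $a(\zeta)\equiv b(\zeta)\pmod d$ for every $\zeta$. Since $d>1$, this rules out any residue occurring exactly once among the $2r$ entries. Every bad tuple therefore takes at most $r$ distinct residues. Given the pattern of coincidences, the tuple is determined by choosing one value of $I$ per block, which gives at most $|I|^r$ choices. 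When $|I|>p$, several integers share a residue, and one counts elements of $I$ in each residue class instead, which still gives $\le|I|^r$ per pattern.

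The main obstacle is the number of admissible coincidence patterns, that is, partitions of $\{1,\ldots,2r\}$ into blocks of size at least $2$. This number grows like $r^{r}$ and is not bounded independently of $r$. My first attempt would be to take the $u$-average in the $2$-block, fully matched configurations more carefully, grouping bad tuples with the same multiset of residues so that the summands cancel or collapse. Before investing in that, however, I would test the statement on a small example, say $|I|=2$ with $\chi$ quadratic. There $|S(u)|^{2r}$ equals $4^r$ for roughly half of all $u$, so the left side is about $\tfrac12 4^r p^n$. This is far larger than $p^n|I|^r=2^r p^n$ once $p$ is large relative to $4^r$. Unless the paper intends the implied constant to be uniform only for $r$ bounded, or intends $p^{n/2}$ to dominate, the assertion that the constant is absolute seems unattainable. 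In that case I would settle for, and clearly flag, the bound with the pattern count $\le(2r)^{r}$ in front of $p^n|I|^r$.
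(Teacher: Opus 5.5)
Your argument follows the paper's route: expand the $2r$-th moment, apply Proposition~\ref{WE} to tuples whose polynomial is not a $d$-th power, bound the remaining tuples trivially by $p^n$, and count them. Your diagnosis is correct and exposes a genuine gap in the paper's proof. The step ``the number of bad tuples is $\ll |I|^r$'' silently absorbs a factor that grows with $r$, so the lemma cannot hold with an absolute constant.

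Your example is sound. Take $\chi$ quadratic and $I=\{N+1,N+2\}$. The identity $\sum_{v}\chi(v(v+1))=-1$ shows that $\chi(v)=\chi(v+1)\neq 0$ for exactly $(p^n-3)/2$ values of $v$. So the left side is at least $4^r(p^n-3)/2$, while the right side is $2r\,4^r p^{n/2}+2^r p^n$. For fixed $r$ and $p\to\infty$ the ratio tends to $2^{r-1}$.

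The correct statement has a constant depending on $r$. That is all the proof of Theorem~\ref{MT} needs: there $r$ is the integer nearest to $n/\varepsilon$, and implied constants may depend on $n$ and $\varepsilon$.

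Two points in your own write-up need repair.

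\textbf{The remark about $|I|>p$.} Drop it. Once residues repeat, a block of size $2$ admits $\sum_{c\in\mathbb{F}_p}\#(I\cap c)^2\asymp |I|^2/p$ choices, not at most $|I|$. In fact the inequality itself then fails. For $r=1$, $n\ge 3$ and $|I|=mp$, one gets $\sum_u|S(u)|^2=|I|^2(p^{n-1}-1)$, which exceeds $2p^{n/2}|I|^2+p^n|I|$ by a factor $\gg p^{1/2}$ once $m\ge p$. So the lemma must assume $|I|\le p$. This is the only case used, since in the application $|I|\le p^{\delta}$.

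\textbf{The pattern count.} The number of partitions of $\{1,\ldots,2r\}$ into blocks of size at least $2$ is not $\le (2r)^r$ for large $r$; it grows like the Bell numbers. So ``pattern count times $|I|^r$'' does not give your proposed constant. Count bad tuples directly instead:
\begin{itemize}
\item list the at most $r$ distinct values in order of first appearance;
\item pad this list to an element of $I^r$;
\item record, for each of the $2r$ positions, the index of its value.
\end{itemize}
This is an injection, giving at most $r^{2r}|I|^r$ bad tuples, which suffices.
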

\begin{proof}
Expanding the $2r$-th moment gives
\[
\sum_{u \in \mathbb{F}_{p^n}}
\left|\sum_{z \in I} \chi(u+z)\right|^{2r}
=
\sum_{z_1,\ldots,z_{2r}\in I}
\sum_{u \in \mathbb{F}_{p^n}}
\chi\!\left(
\prod_{i=1}^{r}(u+z_i)
\prod_{i=r+1}^{2r}(u+z_i)^{-1}
\right).
\]
\[
=\sum_{z_1,\ldots,z_{2r}\in I}
\sum_{u \in \mathbb{F}_{p^n}}
\chi\!\left(
\prod_{i=1}^{r}(u+z_i)
\prod_{i=r+1}^{2r}(u+z_i)^{p^n-2}
\right).
\]
The last equality follows as $u^{p^n-2}$ is the inverse of an element $u\in \mathbb{F}_{p^n}^*$.
Fix a tuple $\mathbf{z}=(z_1,\ldots,z_{2r}) \in I^{2r}$ and define the polynomial function
\[
F_{\mathbf{z}}(u)
=
\prod_{i=1}^{r}(u+z_i)
\prod_{i=r+1}^{2r}(u+z_i)^{p^n-2}.
\]

We classify tuples into two types.

\medskip
\noindent
\textbf{(i) Bad tuples.}
We call $\mathbf{z}$ a \emph{bad tuple} if the polynomial $F_{\mathbf{z}}(u)$ becomes a perfect $d$-th power in $\mathbb{F}_{p^n}[u]$. This can only happen if every distinct value among $z_1,\ldots,z_{2r}$ occurs with multiplicity at least two in this collection. Hence, the number of bad tuples is at most
$
\ll |I|^r.
$
For each bad tuple, we use the trivial bound
$
\left|\sum_{u \in \mathbb{F}_{p^n}} \chi(F_{\mathbf{z}}(u))\right|
\le p^n,
$
so their total contribution is
$
\ll p^n |I|^r.
$

\medskip
\noindent
\textbf{(ii) Good tuples.}
For all remaining tuples (called \emph{good tuples}), the polynomial $F_{\mathbf{z}}(u)$ is not a perfect $d$-th power in $\mathbb{F}_{p^n}[u]$. Hence, by Proposition \ref{WE},
$
\left|\sum_{u \in \mathbb{F}_{p^n}} \chi(F_{\mathbf{z}}(u))\right|
\ll 2r\, p^{n/2}.
$
Since there are at most $|I|^{2r}$ tuples in total, the contribution of good tuples is
$
\ll 2r\, p^{n/2} |I|^{2r}.
$

\medskip

Combining the contributions of bad and good tuples completes the proof.
\end{proof}

\section{Estimates for Multiplicative Energy}
In this section, we derive a sharp upper bound for the multiplicative energy of the set \( B \), which constitutes a central ingredient in our small character sum estimates. Using methods from additive combinatorics, Chang~\cite[Proposition~5]{C1} proved that
\[
E(B) \ll |B|^{11/4}\log p
\]
whenever the side lengths satisfy \( H_i < (\sqrt{p}-1)/2 \). Subsequently, applying techniques from the geometry of numbers, Konyagin~\cite[Lemma~1]{Kon} established the stronger estimate
\[
E(B) \ll_n |B|^2 \log p
\]
for boxes with equal side lengths, namely \( H_1=\cdots=H_n \leq \sqrt{p} \).
Later, Gabdullin ~\cite[Key Lemma]{GB} extended Konyagin’s lemma to boxes with unequal side lengths in the cases \( n=2 \) and \( n=3 \). In the present work, we generalize the result of Konyagin to arbitrary dimension \( n \in \mathbb{N} \) for boxes with at least $(n-2)$ sides of almost equal length, thus recovering Gabdullin’s theorem as a special case.  
\begin{Lemma}[Main Lemma]\label{KL}
Let $n\in \mathbb{N}\setminus\{1\}$ and suppose $H\leq H_1\leq H_2\leq \ldots\leq H_{n-2}\leq H_{n-1}\leq H_n<\sqrt{p/2},\;H_{n-2}<2H.$ Then we have 
\[E(B)\ll |B|^2 (\log p)^n.\]
\end{Lemma}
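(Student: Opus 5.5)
We follow the strategy of Konyagin's Lemma~1 and Gabdullin's Key Lemma, but run the lattice analysis in $\mathbb{R}^{2n}$. First we reduce to a centered box. Since the energy counts solutions of $x_1x_2=x_3x_4$ in $B$, we write
\[
E(B)=\sum_{\lambda\in\mathbb{F}_{p^n}^*} r(\lambda)^2, \qquad r(\lambda)=|\{(a,b)\in B^2: a=\lambda b\}|.
\]
The standard route is then to bound $E(B)\ll \sum_{\lambda} |\{(a,b)\in (B-B)^2 : a=\lambda b\}|\cdot r(\lambda)$, or to use dyadic decomposition in $r(\lambda)$. Either way the problem becomes bounding, for each $\lambda$, the count
\[
N(\lambda)=\left|\left\{(x,y)\in\mathbb{Z}^{2n} : |x_i|,|y_i|\le H_i,\ \textstyle\sum x_i\omega_i=\lambda\sum y_i\omega_i \text{ in } \mathbb{F}_{p^n}\right\}\right|.
\]
The solutions $(x,y)$ form a lattice $\Gamma_\lambda\subset\mathbb{Z}^{2n}$ of full rank and covolume $p^n$, since the condition consists of $n$ linear conditions mod $p$. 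The box is the convex body $D=\prod[-H_i,H_i]^2$. By Lemma~\ref{LCE},
\[
N(\lambda)\ll\prod_{j=1}^{2n}\max(1,\lambda_j^{-1}),
\]
where $\lambda_j$ are the successive minima of $D$ with respect to $\Gamma_\lambda$.

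The key step is to classify $\lambda$ by the number $k$ of successive minima $\le 1$, and to bound both the size of each class and the count there. If $\lambda_k\le1<\lambda_{k+1}$, then $N(\lambda)\ll(\lambda_1\cdots\lambda_k)^{-1}$. The heart of the argument is to show that few independent short vectors force $N(\lambda)$ to be small unless $\lambda$ lies in a thin set. Concretely, if $(x,y)$ and $(x',y')$ are two short vectors, then $xy'-x'y=0$ in $\mathbb{F}_{p^n}$. Since each coordinate of these products is bounded by $2H_n^2<p$ in size, the hypothesis $H_n<\sqrt{p/2}$ suggests lifting to an identity of polynomial-type products. More precisely, the structure constants of the basis intervene, so instead we follow Konyagin: the short vectors span a subspace on which $\lambda$ acts, and the dimension count forces $k\le n$, with each short vector determining $\lambda$. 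Then $\sum_\lambda N(\lambda)^2$ can be organized as $\sum_{k}\sum_{\lambda\in\Lambda_k}(\lambda_1\cdots\lambda_k)^{-2}$. We bound $|\{\lambda: \lambda_1\cdots\lambda_k\approx T\}|$ by counting the short vectors that generate them. Each $\lambda$ with $N(\lambda)\ge 2$ arises as $a/b$ with $a,b\in B-B$, so there are at most $|B|^2$ such ratios in total. Dyadically decomposing each $\lambda_j\in[2^{-s_j-1},2^{-s_j}]$ produces $\ll(\log p)^{2n}$ classes. We hope to reduce this to $(\log p)^n$ by noting that only $\lambda_1,\dots,\lambda_n$ can be $<1$. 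Within each class we use Minkowski's second theorem (Proposition~\ref{Mkow}) and Mahler's Lemma~\ref{Mah} to get the two-sided bound $\lambda_1\cdots\lambda_{2n}\asymp p^n/|B|^2$, which shows the product over the large minima is controlled.

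The main obstacle is the case of unequal side lengths. In Konyagin's setting all $H_i$ are equal and $D$ is a cube, so the sublattice of short vectors is invariant under multiplication by $\mathbb{F}_p$-scalars, and the counting is clean. Here we use the hypothesis $H\le H_1\le\dots\le H_{n-2}<2H$ to say that the first $n-2$ coordinates behave like a cube, up to a factor of $2$ absorbed into constants. This isolates the two exceptional directions $H_{n-1},H_n$, which we handle as in Gabdullin's $n=2,3$ argument: split according to whether the short vectors have nonzero components in those coordinates, and bound each sub-case by $|B|^2(\log p)$ per dyadic parameter. Summing the classes then gives $E(B)\ll|B|^2(\log p)^n$. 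I expect the delicate step to be verifying that the lower bound for $\lambda_1\cdots\lambda_k$ from Lemma~\ref{LCE} and Lemma~\ref{Mah} holds uniformly in the two free side lengths. If it fails, I would first try rescaling the last two coordinates to reduce to a near-cube body and track the resulting distortion in the covolume.
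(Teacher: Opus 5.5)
Your setup is the paper's: passing to the centred box, the lattice $\Lambda_z\subset\mathbb{Z}^{2n}$ of covolume $p^n$ (your $\Gamma_\lambda$; I write $z$ for the ratio to avoid the clash with the minima), Lemma~\ref{LCE}, sorting $z$ by the number $s$ of minima $\le 1$, and counting the $z$ in a class through a short vector that determines $z$.

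\textbf{The gap.} You claim that ``the dimension count forces $k\le n$.'' It does not. The reductions mod $p$ of vectors of $\Lambda_z$ lie in the $n$-dimensional graph of $x\mapsto zx$, but $\mathbb{Z}$-independent vectors need not stay independent mod $p$. The hypothesis $H_n<\sqrt{p/2}$ only keeps $2\times 2$ minors below $p$, not the $(n+1)\times(n+1)$ minors that would have to vanish. Concretely, suppose $\Lambda_z^*$ contains $w/p$ with $w\in\mathbb{Z}^{2n}$ much shorter than $\sqrt p$. Every short $w=(u,v)$ with $v\neq 0$ arises this way for exactly one $z$, generically $z\notin\mathbb{F}_p$. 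Then $\Lambda_z\cap w^{\perp}$ has rank $2n-1$ and covolume only about $p^{n-1}\|w\|$, and for a general basis it typically contains more than $n$ independent vectors of $D$. So the range $n<s\le 2n$ is real.

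\textbf{What the paper uses there.} This range needs an ingredient absent from your plan: Mahler's inequality $\lambda_1^*\lambda_{2n}\ll 1$ (Lemma~\ref{Mah}) for the polar lattice $\Lambda_z^*\subseteq p^{-1}\mathbb{Z}^{2n}$ and polar body $D^*$. It gives $f_0(z)\ll\lambda_{2n}^{2n-s}\prod_{i=1}^{2n}\lambda_i^{-1}\ll(\lambda_1^*)^{-(2n-s)}|B|^2p^{-n}$. The paper then counts $z$ dyadically according to $p\lambda_1^*(z)/H_1$, using that a nonzero short dual vector lies in $\Lambda_z^*$ for at most one $z$. You cite Lemma~\ref{Mah} only for $\prod_i\lambda_i\asymp p^n/|B|^2$, which is just Minkowski's second theorem.

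\textbf{The second missing idea.} The paper splits into $z\notin\mathbb{F}_p$ and $z\in\mathbb{F}_p^*$, and this is where the unequal sides and the power $(\log p)^n$ are actually handled. Your ``treat the last two directions as in Gabdullin'' and ``hope to reduce to $(\log p)^n$'' are placeholders for it.
\begin{itemize}
\item For $z\notin\mathbb{F}_p$ the ordering gives $\lambda_1(z)\ge H_{n-1}^{-1}$. A vector of $\lambda D$ with $\lambda<H_{n-1}^{-1}$ has every coordinate zero except the two $\omega_n$-coordinates, which forces $z=u_{2n}/u_n\in\mathbb{F}_p$. This keeps the longest side out of the dyadic count of the $z$'s. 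A single dyadic parameter ($\lambda_1$, or $\lambda_1^*$ when $s>n$) then gives $L_1\ll|B|^2\log p$ (Lemma~\ref{l1}).
\item For $z\in\mathbb{F}_p^*$ that argument is unavailable (e.g.\ $\lambda_1(1)=H_n^{-1}$). The paper instead uses that $xz=y$ splits coordinatewise, so $f_0(z)=\prod_i f_i(z)$. Hence $\sum_z f_0(z)^2\le\prod_i\sum_z f_i(z)^2\ll\prod_i H_i^2\log p$ by a two-dimensional lattice argument (Lemma~\ref{l2}).
\end{itemize}
That product of $n$ one-dimensional logarithms is the source of $(\log p)^n$, not a pruning of dyadic classes of successive minima.
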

\begin{proof}
    Let us define
\[
Z' \;=\; \frac{B\setminus\{0\}}{B\setminus\{0\}}
    \;=\;
    \{\, z\in \mathbb{F}_{p^{n}} : xz = y \text{ for some } x,y \in B\setminus\{0\} \,\}.
\]
If $x_{1},x_{2},x_{3},x_{4}\in B$ satisfies $x_{1}x_{2}=x_{3}x_{4}$ with 
$(x_{1},x_{4})\neq (0,0)$ and $(x_{2},x_{3})\neq (0,0)$, then there exists 
$z\in Z'$ such that $x_{1}z=x_{3}$ and $x_{4}z=x_{2}$. Consequently,
\begin{equation}\label{EN}
    E(B)
    \;\leq\;
    2|B|^{2} + \sum_{z\in Z'} f(z)^{2},
\end{equation}
where $f(z)$ denotes the number of solutions $(x,y)\in B^{2}$ to the relation $xz=y$.

\medskip

Define
\[
B_{0}
    = 
    \left\{
        \sum_{i=1}^{n} x_{i}\omega_{i} \;:\;
        -H_{i}\leq x_{i}\leq H_{i},\; 1\leq i \leq n
    \right\},
    \qquad
Z = \frac{B_{0}\setminus\{0\}}{B_{0}\setminus\{0\}}.
\]
In complete analogy with $f(z)$, set
\[
f_{0}(z)
    = 
    \#\{\, (x,y)\in B_{0}^{2} : xz = y \,\}.
\]

Observe that to each solution $(x,y)\in B^{2}$ to the equation $xz=y$ we may associate a solution in $B_{0}^{2}$ by translation of coordinates.  
Indeed, if $(x_{1},y_{1}),\ldots,(x_{k},y_{k})\in B^{2}$ are distinct solutions of $xz=y$, then the differences
\[
(0,0),\;
(x_{2}-x_{1},\,y_{2}-y_{1}),\ldots,
(x_{k}-x_{1},\,y_{k}-y_{1})
\]
are distinct solutions in $B_{0}^{2}$. Hence,
\[
f(z)\;\leq\; f_{0}(z).
\]
Moreover, $f_{0}(z)=1$ for all $z\in \mathbb{F}_{p^{n}}^*\setminus Z$. Thus
\begin{equation}\label{fz}
    \sum_{z\in Z'} f(z)^{2}
        \;\leq\;
        \sum_{z\in Z} f_{0}(z)^{2}
        + |\, Z' \setminus Z \,|.
\end{equation}
Since $|Z'|\leq |B|^{2}$, combining \eqref{EN} and \eqref{fz} yields
\[
E(B)
    \;\leq\;
    3|B|^{2} + \sum_{z\in Z} f_{0}(z)^{2}.
\]

\medskip

It therefore remains to estimate the quantity
\[
L = \sum_{z\in Z} f_{0}(z)^{2}.
\]
We decompose $L=(L_1 +L_2)$ according to the value of $z$:
\[
L_{1}
    = 
    \sum_{z\in Z\setminus\mathbb{F}_{p}} f_{0}(z)^{2},
    \qquad
L_{2}
    =
    \sum_{z\in \mathbb{F}_{p}^*} f_{0}(z)^{2},
\]

\medskip
The following two lemmas provide the desired bounds for Lemma \ref{KL}.
\begin{Lemma}\label{l1}
We have
\[
L_{1} \;\ll\; |B|^{2}\log p.
\]
\end{Lemma}

\begin{Lemma}\label{l2}
We have
\[
L_{2} \;\ll\; |B|^{2}(\log p)^{n}.
\]
\end{Lemma}

\end{proof}
We shall establish the above two estimates in the following sub-sections.
\subsection{Proof of Lemma \ref{l1}}
Fix $z\in Z$ and consider the lattice $\Lambda_{z}\subset \mathbb{Z}^{2n}$ defined by 
\[
\Lambda_{z}
=\Big\{(x_1,\ldots,x_n,y_1,\ldots,y_n)\in\mathbb{Z}^{2n}:\;
z\textstyle\sum_{i=1}^{n}x_i\omega_i
=\sum_{i=1}^{n}y_i\omega_i\Big\}.
\]
For any fixed $(x_1,\ldots,x_n)\in\mathbb{Z}^{n}$, the defining relation uniquely determines each of the coordinates $y_i$ modulo $p$. Consequently, for any $M\ge 1$,
\[
\big|\Lambda_{z}\cap[-M,M]^{2n}\big|
=\frac{(2M)^{2n}}{p^{\,n}}(1+o(1)) \qquad (M\to\infty),
\]
and therefore
\[
\mu(\mathbb{R}^{2n}/\Lambda_{z})=p^{n}.
\]

Define the symmetric convex body
\[
D=\Big\{(x_1,\ldots,x_n,y_1,\ldots,y_n)\in\mathbb{R}^{2n}:\; |x_i|,|y_i|\le H_i,\ 1\le i\le n\Big\},
\]
and let $\lambda_1(z)\le\cdots\le\lambda_{2n}(z)$ denote the successive minima of $D$ with respect to $\Lambda_z$. Here it is essential to note that $f_0(z)=|\Lambda_z\cap D|$ by the definition of $f_0(z)$. Since $z\in Z$, the body $D$ contains a nonzero lattice point, and hence
\[
\lambda_1(z)\le 1.
\]
By Proposition \ref{Mkow} (Minkowski's second theorem),
\begin{equation}\label{eq:minkowski-product}
\prod_{i=1}^{2n}\lambda_i(z)
\gg
\frac{\operatorname{\mu}(\mathbb{R}^{2n}/\Lambda_z)}{\operatorname{\mu}(D)}
\gg
p^{n}|B|^{-2}.
\end{equation}
Furthermore, by the standard lattice--counting inequality, Lemma \ref{LCE},
\begin{equation}\label{eq:f0-bound}
f_0(z)=|\Lambda_z\cap D|
\;\ll\;
\prod_{i=1}^{2n}\max\{1,\lambda_i(z)^{-1}\}.
\end{equation}
We now establish a uniform lower bound for the first minimum. We claim that
\begin{equation}\label{eq:lambda1-lower}
\lambda_1(z)\ge H_{n-1}^{-1}.
\end{equation}
Suppose instead that $\lambda_1(z)<H_{n-1}^{-1}$. Then $\lambda_1(z)D$ contains a nonzero lattice vector of the form
\[
(0,\ldots,0,u_n,0,\ldots,0,u_{2n})\in\Lambda_z,
\]
and therefore
\[
|u_n|<H_n\lambda_1(z) < H_nH_{n-1}^{-1},
\qquad
|u_{2n}|<H_n\lambda_1(z) < H_nH_{n-1}^{-1}.
\]
From the defining relation of $\Lambda_z$ we obtain
\[
z\,u_n\omega_n=u_{2n}\omega_n,
\]
which implies
\[
z=\frac{u_{2n}}{u_n}\in\mathbb{F}_{p},
\]
contradicting the assumption that $z\notin\mathbb{F}_p$. Thus \eqref{eq:lambda1-lower} follows.

Now, for every $z\in Z\setminus \mathbb{F}_{p}$ we have $\lambda_{1}(z)\le 1$, and
\[
\lambda_{2n}(z)\ge\lambda_{2n-1}(z)\ge\cdots\ge \lambda_{1}(z)\ge H_{n-1}^{-1}.
\]
For $1\le j\le J:=[\log_2 H_{n-1}]+1$, define the dyadic classes
\[
Z_j=\Big\{z\in Z\setminus \mathbb{F}_{p}:\ 2^{\,j-1}\le H_{n-1}\lambda_1(z)<2^{\,j}\Big\}.
\]

For each $z\in Z_j$, the minimal vector $u\in \lambda_1(z)D\cap \Lambda_z$ uniquely determines $z$. Indeed, write 
\[
u=(u_1,\ldots,u_{2n}),
\qquad
x=\sum_{i=1}^{n}u_i\omega_i,\qquad
y=\sum_{i=1}^{n}u_{n+i}\omega_i,
\]
so that the lattice relation implies $zx=y$ in $\mathbb{F}_{p^n}$, and therefore
\[
z=yx^{-1}.
\]
Thus the map $z\mapsto u$ is injective on each $Z_j$.  
Consequently,
\[
|Z_j|
\le 
\big| 2^{\,j}H_{n-1}^{-1}D \cap \mathbb{Z}^{2n}\big|.
\]
Now after setting $j'=\log_2(H_{n-1}/H_1)$ we obtain from above
\begin{equation}\label{Z}
    |Z_j|\le \big| 2^{\,j}H_{n-1}^{-1}D \cap \mathbb{Z}^{2n}\big|\ll \prod_{i=1}^n\max\{1,H_i2^jH_{n-1}^{-1}\}^2\leq 
    \begin{cases}
        2^{2(n-1)j}H_2^2\ldots H_{n-2}^2H_n^2 H_{n-1}^{-2(n-2)} \text{ if }1\leq j< j'\\
        2^{2nj}|B|^2H_{n-1}^{-2n}\text{ if } j'\leq j\leq J\\
    \end{cases}
\end{equation}
Further set $s=s(z)=\max\{j:\lambda_j\leq 1\}$ and $Z^s=\{z\in Z\setminus \mathbb{F}_p:s(z)=s\}.$ Recalling the definition of $L_1$, we get
$$L_1 \leq\sum_{s=1}^{2n}\sum_{z \in Z^s}f_0^2(z).$$
Let us define $Z_j^s=Z^s\cap Z_j$. Then
$$\sum_{z\in Z^s}f_0^2(z)\leq \sum_j\sum_{z\in Z_j^s}f_0^2(z).$$
\textbf{Case 1: Let $1 \le s < n$.}
We have
\[
f_0(z) \ll \lambda_1^{-1}\lambda_2^{-1}\cdots\lambda_s^{-1}.
\]
Using \eqref{Z} and the bound
\[
\lambda_{2n}^{-1} \le \lambda_{2n-1}^{-1} \le \cdots \le \lambda_1^{-1}
\ll 2^{-j} H_{n-1}, \qquad z \in Z_j,
\]
we obtain
\begin{equation}
\begin{split}
\sum_{j}\sum_{z \in Z_j^s} f_0^2(z)
= {} & \sum_j \sum_{z \in Z_j^s}
\lambda_1^{-2}\lambda_2^{-2}\cdots\lambda_s^{-2} \\
= {} & \sum_{1 \le j < j'}
|Z_j^s|\,\lambda_1^{-2}\cdots\lambda_s^{-2}
+ \sum_{j' \le j \le J}
|Z_j^s|\,\lambda_1^{-2}\cdots\lambda_s^{-2} \\
\ll {} &
\sum_{1 \le j < j'}
\left(
2^{2(n-1)j}
H_2^2 \cdots H_{n-2}^2 H_n^2
H_{n-1}^{-2(n-2)}
\right)
\left(2^{-j}H_{n-1}\right)^{2s} \\
& \quad
+ \sum_{j' \le j \le J}
2^{2nj} |B|^2 H_{n-1}^{-2n}
\left(2^{-j}H_{n-1}\right)^{2s} \\
\ll {} &
\left( \sum_{1 \le j < j'} 2^{2(n-1-s)j} \right)
H_2^2 \cdots H_n^2 H_{n-1}^2
H_{n-1}^{-2(n-s)} \\
& \quad
+ \left( \sum_{j' \le j \le J} 2^{2(n-s)j} \right)
H_{n-1}^{-2(n-s)} |B|^2 \\
\ll {} &
\left(\frac{H_{n-1}}{H_1}\right)^{2(n-1-s)}
H_2^2 \cdots H_n^2 H_{n-1}^2
H_{n-1}^{-2(n-s)}
+ 2^{4(n-s)} |B|^2 \\
\ll {} &
H_1^2 \cdots H_n^2 + 2^{4(n-s)} |B|^2
\ll |B|^2 .
\end{split}
\end{equation}
\textbf{Case 2: Let $s = n$.}
Proceeding as in Case~1, it is straightforward to obtain
\[
\sum_j \sum_{z \in Z_j^n} f_0^2(z)
\ll
\sum_j \sum_{z \in Z_j^n}
\lambda_1^{-2}\lambda_2^{-2}\cdots\lambda_n^{-2}
\ll |B|^2 \log p .
\]
\textbf{Case 3: Consider the other cases $n<s\leq 2n.$}\\
Let us first begin with $s=2n$. From the equation \eqref{eq:minkowski-product} and the boundedness assumption on the side lengths of the boxes, we deduce
\[\sum_{z\in Z^s}f_0^2(z)\ll \sum_{z\in Z}|B|^4p^{-2n}\ll |B|^2|B|^4p^{-2n}\ll |B|^2p^{4n/2}p^{-2n}\leq |B|^2.\]

Now, it suffices to show a similar bound for $n<s<2n$. 

\medskip

We begin by recalling the definition of the polar (dual) lattice (see \eqref{LD}):
\begin{equation*}
\Lambda_z^*=\Bigl\{(u_1,\ldots,u_n,u_{n+1},\ldots,u_{2n})\in \mathbb{R}^{2n}:\ 
\sum_{i=1}^n u_i x_i+\sum_{i=1}^n u_{i+n} y_i \in \mathbb{Z} 
\ \text{for all } (x_1,\ldots,x_n,y_1,\ldots,y_n)\in \Lambda_z \Bigr\}.
\end{equation*}
Since $\Lambda_z \supseteq p\mathbb{Z}^{2n}$, it follows that $\Lambda_z^* \subseteq p^{-1}\mathbb{Z}^{2n}$.
We also define the polar body $D^*$ (cf. \eqref{LD}) by
\begin{equation*}
D^{*}
=
\Bigl\{
(u_1,\ldots,u_n,v_1,\ldots,v_n)\in \mathbb{R}^{2n} :
\sum_{i=1}^n |u_i x_i|
+ \sum_{i=1}^n |v_i y_i|
\le 1
\ \text{for all } (x_1,\ldots,x_n,y_1,\ldots,y_n)\in D
\Bigr\}.
\end{equation*}
Clearly,
$$D^*=\left\{(u_1,\ldots,v_n)\in \mathbb{R}^{2n}:\sum_{i=1}^n(|u_i|+|v_i|)H_i\leq 1\right\}.$$
Let $\lambda_1^*=\lambda_1(z)^*$ be the first successive minimum of the set $D^*$ with respect to $\Lambda^*_z$. By Lemma \ref{Mah}, we have 
\begin{equation}\label{Res}
    \lambda_1^*\lambda_{2n}\ll 1.
\end{equation}
Thus considering \eqref{eq:minkowski-product} and \eqref{eq:f0-bound} for $z\in Z_s$ where, $n<s<2n$ we obtain
\begin{equation}\label{f_0-boundnew}
 \begin{split}
     f_0(z)\ll \prod_{i=1}^s\lambda_i^{-1}(z)\leq \lambda_{2n}^{(2n-s)}\prod_{i=1}^{2n}\lambda_i^{-1}(z).
 \end{split}   
\end{equation}
Now from \eqref{eq:minkowski-product} and \eqref{Res} we further get
\begin{equation}\label{f_0-boundnew2}
    f_0(z)\ll \lambda_{2n}^{(2n-s)}|B|^2p^{-n}\ll (\lambda_1^*)^{-(2n-s)}|B|^2p^{-n}.
\end{equation}
Again we split the above estimation in two cases depending on the value of $\lambda_1^*(z)$.

\medskip

\textbf{Case 1: if $\lambda_1^*(z)\geq 1$.}\\
In this case, the equation \eqref{f_0-boundnew2} is actually bounded by $|B|^2p^{-n}$. So, we can estimate $\sum_{z \in Z^s} f_0(z)^2$ similarly as in the case $s=2n$.

\medskip

\textbf{Case 2: if $\lambda_1^*(z)< 1$.}\\
Suppose, in addition, $\lambda^*(z)<H_1p^{-1}$ then one checks from the inclusion $\Lambda^*_z\subseteq p^{-1}\mathbb{Z}^{2n}$ that $\lambda_1^*D^*\cap \Lambda^*_z=\{0\}$ which is surely a contradiction.
Set
\[Z'_j=\left\{z\in Z: 2^{j-1}\leq \frac{p\lambda^*_1(z)}{H_1}<2^j\right\}\]
where, $j=1,2,\ldots,\left[\log_2 \frac{p}{H_1}\right]$.
We  now claim that the vector $u\in \lambda^*_1(z)D^*\cap\Lambda^*_z$ uniquely corresponds to an element $z\in Z_j'$.\\
Suppose on the contrary, 
\[u=(u_1/p,\ldots,u_n/p,v_1/p,\ldots,v_n/p)\in \Lambda^*_{z'}\cap\Lambda^*_{z''}\]
where, $z'\neq z''$ and $u_i,v_i\in Z_j$. Now, as $u\in \lambda^*_1(z)D^*$ and $\lambda^*_1(z)<\frac{H_1}{p}2^j$ the coordinates of the vector $u$ follow $\sum_{i=1}^n|u_i|+\sum_{i=1}^n|v_i|<2^j$. Take an arbitrary element 
\[x=\sum_{i=1}^nx_i\omega_i\in \mathbb{F}_{p^n}\] and set
\[\begin{cases}
    y'=xz'=\sum_{i=1}^ny_i'\omega_i\\
    y''=xz''=\sum_{i=1}^ny_i''\omega_i.
\end{cases}\]
Then $(x_1,x_2,\ldots,x_n,y_1',\ldots,y_n')\in \Lambda_{z'}$ and $(x_1,x_2,\ldots,x_n,y_1'',\ldots,y_n'')\in \Lambda_{z''}$ and by the definition of polar set:
\begin{equation}
\begin{cases}
 \sum_{i=1}^n\frac{x_iu_i}{p}+\sum_{i=1}^n\frac{y_i'v_i}{p}\in \mathbb{Z}\\
 \sum_{i=1}^n\frac{x_iu_i}{p}+\sum_{i=1}^n\frac{y_i''v_i}{p}\in \mathbb{Z}.
\end{cases}
\end{equation}
By subtracting the above two, it implies \[\sum_{i=1}^n(y_i'-y_i'')v_i\equiv 0 \pmod{p}.\]Also note that $(y_i'-y_i'')$ can be chosen arbitrarily and thus $
v_i \equiv 0 \pmod{p}
\quad \text{for all } 1 \le i \le n.
$ Since $$|v_i|<2^j\leq p,$$ we conclude from here
 \[
v_i = 0 \quad \text{for all } 1 \le i \le n .
\].\\
From these it follows
\[
\sum_{i=1}^n x_i u_i \equiv 0 \pmod{p}
\quad \text{for all } (x_1,\ldots,x_n)\in \mathbb{Z}^n .
\] Hence,
\[
u_i = 0 \quad \text{for all } 1 \le i \le n .
\] Therefore, the vector $u$ is, in fact, a zero vector which is absurd. Henceforth, the vector $u\in \lambda_1^*(z)D^*\cap \Lambda^*_z$ uniquely corresponds to an element $z\in Z'_j$. \\
The vector $(u_1,\ldots,u_n,v_1,\ldots, v_n)\in (2^jH_1/p)D^*\cap \Lambda^*_{z}$ obeys the inequality $\sum_{i=1}^n(|u_i|+|v_i|)H_i\leq 2^jH_1;$ hence, $|u_i|,|v_i|\leq 2^jH_1H_i^{-1}.$ So this implies 
\begin{equation}\label{Z'}
    |Z'_j|\leq \prod_{i=1}^n \max\{1,2^jH_1H_i^{-1}\}.
\end{equation}
Let us fix 
\[
j_i =
\begin{cases}
\log\!\left(\dfrac{H_{i+1}}{H_1}\right),
& \text{if } 1 \le i < n, \\[6pt]
\underbrace{\log\!\left(\dfrac{p}{H_1}\right)+1}_{=:\log\!\left(\dfrac{H_{n+1}}{H_1}\right)},
& \text{if } i = n .
\end{cases}
\]
So, we obtain from \eqref{Z'} that 
\begin{equation}\label{Z_j'}
|Z_j'| \le
\begin{cases}
2^{2j}, 
& \text{if } 1 \le j < j_1, \\[4pt]
2^{4j} H_1^{2} H_2^{-2}, 
& \text{if } j_1 \le j < j_2, \\[4pt]
\vdots \\[4pt]
2^{2k j} H_1^{2(k-1)} H_2^{-2} \cdots H_k^{-2}, 
& \text{if } j_{k-1} \le j < j_k, \\[4pt]
\vdots \\[4pt]
2^{2n j} H_1^{2n} |B|^{-2}, 
& \text{if } j_{n-1} \le j \le j_n .
\end{cases}
\end{equation}
For $n<s<2n$ we define $Z_j^s:=Z^s\cap Z'_j$. Now from the equation \eqref{f_0-boundnew2} we deduce
$$\sum_{z\in Z^s}f_0^2(z)\ll |B|^4p^{-2n}\sum_{j}\sum_{z\in Z_j^s}(\lambda_1^*(z))^{-2(2n-s)}.$$
As $\lambda_1^*(z)\asymp(2^jH_1/p)$ for $z\in Z'_j$, we obtain from above the following
\begin{equation*}
\begin{split}
    \sum_{z \in Z^s} f_0(z)&\ll |B|^4p^{-2n}\sum_{j}\sum_{z\in Z_j^s}(2^jH_1/p)^{-2(2n-s)}\\
           & \ll |B|^4p^{-2n}\sum_{k=1}^{n}\sum_{j_{k-1}\leq j<j_k}\sum_{z\in Z_j^s}(2^jH_1/p)^{-2(2n-s)}.
\end{split}
\end{equation*}
Now, from the equation \eqref{Z_j'} we obtain:

\begin{align*}
\sum_{Z^s} f_0 
\ll\; & |B|^4 p^{-2n}
\sum_{k=1}^{n}
\sum_{j_{k-1} \le j < j_k}
2^{2jk}
H_1^{2(k-1)} H_2^{-2} \cdots H_k^{-2}
\left( \frac{2^j H_1}{p} \right)^{-2(2n-s)} \\
\ll\; & |B|^2 p^{2(n-s)} H_1^{2s-4n}
\sum_{k=1}^{n}
H_1^{2k} H_{k+1}^2 \cdots H_n^2
\sum_{j_{k-1} \le j < j_k}
2^{2j(k+s-2n)}.
\end{align*}
Write
\[
M_k := \sum_{j_{k-1} \le j < j_k} 2^{2j(k+s-2n)},
\]
and divide the above sum according to $k < 2n-s$ and $k \geq 2n-s$.
\begin{equation}\label{f_o}
   \sum_{Z^s} f_0 \ll \underbrace{|B|^2 p^{2(n-s)} H_1^{2s-4n}\sum_{k < 2n-s}
H_1^{2k} H_{k+1}^2 \cdots H_n^2M_k}_{S_1}+\underbrace{|B|^2 p^{2(n-s)} H_1^{2s-4n}\sum_{k \geq 2n-s}
H_1^{2k} H_{k+1}^2 \cdots H_n^2M_k}_{S_2}.
\end{equation}
If $k < 2n-s$, then $k+s-2n < 0$, and hence
\[
M_k \ll \sum_{j_{k-1} \le j < j_k} 2^{2j(k+s-2n)} \ll \left(\frac{H_k}{H_1}\right)^{2(k+s-2n)}\log p.
\]
Therefore,
\begin{equation}\label{s1_acta}
    \begin{split}
        S_1&\ll |B|^2 p^{-2(s-n)}\sum_{k=1}^{2n-s-1}
H_k^{-2(2n-k-s)} H_{k+1}^2 \cdots H_n^2\;\log {p}  \\
          &\ll \; |B|^2\log{p}.\\
    \end{split}
\end{equation}
The last inequality comes from the following assumptions: $H\leq H_1\leq H_2\leq \ldots\leq H_{n-2}\leq H_{n-1}\leq H_n<\sqrt{p/2},\;H_{n-2}<2H$ and $s>n$.\\
If $k \geq 2n-s$, then $k+s-2n \geq 0$, and
\[
M_k \ll 2^{2j_k(k+s-2n)} \ll \left(\frac{H_{k+1}}{H_1}\right)^{2(k+s-2n)}.
\]
Substituting this bound, we obtain
\begin{align*}
S_2 
&\ll |B|^2 p^{2(n-s)} H_1^{2s-4n}
\sum_{k \ge 2n-s}
H_1^{2k} H_{k+1}^2 \cdots H_n^2
\left(\frac{H_{k+1}}{H_1}\right)^{2(k+s-2n)} \\
&\ll |B|^2
\sum_{k \ge 2n-s}
\left(\frac{H_{k+1}}{p}\right)^{2(k+s-2n)}
\left(\frac{H_{k+1}}{p}\right)^2 \cdots
\left(\frac{H_n}{p}\right)^2.
\end{align*}
Since $H_1 \le \cdots \le H_{n+1} \ll p$, it follows that
\begin{equation}\label{s2_acta}
S_2 \ll |B|^2.
\end{equation}
Combining \eqref{s1_acta} and \eqref{s2_acta} in \eqref{f_o}, we conclude that
\[
\sum_{z \in Z^s} f_0^2(z) \ll |B|^2 \log p,
\]
which establishes the desired bound for $n < s < 2n$ and completes the proof of Lemma~\ref{l1}.

\subsection{Proof of Lemma \ref{l2}}
The proof follows the argument of \cite[Lemma 2]{GB}, with the necessary modifications for general $n$. We sketch the main steps.

Fix $z \in \mathbb{F}_p^*$ and write
\[
x=\sum_{i=1}^n x_i \omega_i,\qquad y=\sum_{i=1}^n y_i \omega_i.
\]
Then $xz=y$ is equivalent to $zx_i \equiv y_i \pmod p$ for $1\le i\le n$. Hence
\[
f_0(z)=\prod_{i=1}^n f_i(z), \qquad 
f_i(z)=\#\{(x_i,y_i)\in[-H_i,H_i]^2: x_i z \equiv y_i \pmod p\}.
\]

It follows that
\begin{equation}\label{L2short}
L_2=\sum_{z\in\mathbb{F}_p^*} f_0^2(z)
\le \prod_{i=1}^n \left(\sum_{z\in\mathbb{F}_p^*} f_i^2(z)\right).
\end{equation}
The sum $\sum_{z\in \mathbb{F}_p^*} f_i^2(z)$ appears in the proof of \cite[Lemma 2]{GB}; for completeness, we briefly indicate a few steps.
To estimate $\sum_{z\in \mathbb{F}_p^*} f_i^2(z)$, we associate to each $z$ the lattice
\[
\Lambda_z=\{(x,y)\in\mathbb{Z}^2:\ y\equiv xz \!\!\pmod p\},
\]
of covolume $p$. Let $\lambda_1(z),\lambda_2(z)$ denote its successive minima for all $z\in \mathbb{F}^*_p$. Then by Proposition \ref{Mkow} and Lemma \ref{LCE}
we get:
\[
\lambda_1(z)\lambda_2(z)\gg \frac{p}{H_i^2},
\qquad 
f_i(z)\ll \prod_{j=1}^2 \max\{1,\lambda_j^{-1}(z)\}.
\]

We partition according to the size of $\lambda_1(z)$ and estimate by counting lattice points in suitable dilates of $[-H_i,H_i]^2$ (See \cite[Lemma 2]{GB}). This yields
\[
\sum_{z\in\mathbb{F}_p^*} f_i^2(z)\ll H_i^2 \log p,
\]
using $H_i\le \sqrt{p}$.

Substituting into \eqref{L2short}, we obtain
\[
L_2 \ll \prod_{i=1}^n H_i^2 (\log p)^n
= |B|^2 (\log p)^n,
\]
which completes the proof.
\section{Proof of the main theorem}
We will illustrate the proof of Theorem \ref{MT} here. Let $H\leq \sqrt{p/2}$ be a natural number and let $\chi$ be a multiplicative character of $\mathbb{F}_{p^n}$. Define
\[
D(H,\chi):=\sup_{B}\frac{\left|\sum_{x \in B}\chi(x)\right|}{|B|},
\]
where the maximum is taken over all boxes $B$ of the form \eqref{B} whose side lengths satisfy
\begin{equation}\label{H}
    H \leq H_i \leq 2H, \qquad i=1,2,\ldots,n-2.
\end{equation}
We first observe that if the side lengths of $B$ are large, namely
\[
H \leq H_i \leq p, \qquad i=1,2,\ldots,n-2,
\]
then $B$ can be partitioned into a disjoint union of boxes whose side lengths satisfy \eqref{H}. By the assumption on the length of the box $H_i\geq (p^{n(1/4+\varepsilon)}/H_{n-1}H_{n})^{1/(n-2)}$ for all $1\leq i\leq n-2$ we can take $H=\left[(p^{n(1/4+\varepsilon)}/H_{n-1}H_{n})^{1/(n-2)}\right]$. Consequently, by the definition of $D(H,\chi)$,
\[
\left|\sum_{x \in B}\chi(x )\right| \leq D(H,\chi)\,|B|.
\]
Therefore, without loss of generality, it suffices to establish the following bound
\[
\left|\sum_{x \in B}\chi(x )\right| \leq \,|B|\, 
p^{-\varepsilon^2
\frac{1-\frac{1}{2n}}{\left(1+\frac{1}{4n}\right)\left(2-\frac{1}{2n}\right)}},
\]
where the side lengths of the box $B$ satisfy \eqref{H} with $H=\left[(p^{n(1/4+\varepsilon)}/H_{n-1}H_{n})^{1/(n-2)}\right]$.

\medskip

\emph{Case 1. $H_n<\sqrt{p/2}$.}

\medskip
\noindent

By our previous assumption \eqref{H} with the choice of $H$, $|B| \asymp p^{n(1/4+\varepsilon)}$. Let $\delta=\delta(\varepsilon)>0$ be a number which will be chosen later. Set

\[I=[1,p^{\delta}] \cap \mathbb{Z}\text{  and  } B_0=\left\{\sum_{i=1}^{n}x_i\omega_i:x_i\in [0,p^{-2\delta}H_i]\cap \mathbb{Z},1\leq i\leq n\right\}.\]
Now as $\#([0,p^{-2\delta}H_i]\cap \mathbb{Z})\asymp 1+p^{-2\delta}H_i\gg p^{-2\delta}H_i$ we have 
$$|B_0|\gg p^{-2\delta n}|B|.$$
Since, $B_0 I\subseteq \left\{\sum_{i=1}^{n-1}x_i\omega_i:x_i\in [0,p^{-\delta}H_i]\cap \mathbb{Z},1\leq i\leq n\right\},$ for all $y\in B_0$ and $z\in I$ we have 
\[\left|\sum_{x \in B}\chi(x )-\sum_{x \in B}\chi(x+yz)\right|\leq \left|B\setminus (B+yz)\right|+\left|(B+yz)\setminus B\right|< 2np^{-\delta}|B|.\]
Thus, this follows that
\begin{equation}\label{FI}
\sum_{x \in B}\chi(x )=\frac{1}{|B_0||I|}\sum_{x \in B, y\in B_0, z\in I}\chi(x+yz) +O(np^{-\delta}|B|).
\end{equation}
Further,
\begin{equation*}
\left|\sum_{x \in B,\; y \in B_0,\; z \in I}\chi(x+yz)\right|\leq \sum_{x \in B ,y \in B_0}\left|\sum_{z\in I}\chi(x+yz)\right|=\sum_{u\in \mathbb{F}_{p^n}}\omega(u)\left|\sum_{z\in I}\chi(u+z)\right| + |B|.|I|,
\end{equation*}
where $\omega(u)=\#\{(x,y)\in B \times (B_0\setminus\{0\}):xy^{-1}=u\}.$

Let $r$ be a positive integer which will be chosen later. Using H\"older's inequality in the above equation, we obtain 
\begin{equation}\label{TI}
\begin{split}
&\left|\sum_{x \in B,\; y \in B_0,\; z \in I}\chi(x+yz)\right|\\
\leq &\left(\underbrace{\sum_{u\in \mathbb{F}_{p^n}}\omega(u)}_{A}\right)^{1-1/r} \left(\underbrace{\sum_{u \in \mathbb{F}_{p^n}}\omega^2(u)}_{B}\right)^{1/(2r)}\left(\underbrace{\sum_{u\in \mathbb{F}_{p^n}}\left|\sum_{z\in I}\chi(u+z)\right|^{2r}}_{C}\right)^{1/(2r)} +|B|\cdot|I|.
\end{split}
\end{equation}

By the trivial estimate, it follows that
\begin{equation}\label{A}
A\leq |B|.|B_0|.
\end{equation}

On the other hand, $\omega(0)\leq |B_0|$ and so $\omega(0)^2\leq |B|.|B_0|$ as $|B_0|\leq B$.

Using Cauchy-Schwarz inequality and the Lemma \ref{KL}, it follows that
\begin{equation}\label{FoI}
\begin{split}
\sum_{u\in \mathbb{F}^*_{p^n}}\omega^2(u)&=\#\{(x_1,x_2,y_1,y_2)\in B\times B\times B_0\times B_0:x_1y_2=x_2y_1\neq 0\}\\
&\leq E(B)^{1/2}E(B_0)^{1/2}\ll |B||B_0|(\log p)^n.\\
\end{split}
\end{equation}
Combing the above two, we deduce 
\begin{equation}\label{FvI}
B=\sum_{u\in \mathbb{F}_{p^n}}\omega^2(u)\ll|B||B_0|(\log p)^n.
\end{equation}
Furthermore, from Lemma \ref{WE2} we can conclude:
\begin{equation}\label{WL}
C\leq 2rp^{n/2}|I|^{2r}+p^n|I|^r
\end{equation}
and hence 
\begin{equation*}
\left(\sum_{u\in \mathbb{F}_{p^n}}\left|\sum_{z\in I}\chi(u+z)\right|^{2r}\right)^{1/2r}\ll p^{n/(4r)}|I|+ p^{n/(2r)}|I|^{1/2}.
\end{equation*}
Substituting the estimates \eqref{A}, \eqref{FvI}, and \eqref{WL} into \eqref{TI} and finally combining this with \eqref{FI}, this implies
\begin{equation}
\begin{split}
 \sum_{x\in B} \chi(x) &\ll \frac{1}{|B_0||I|}(|B||B_0|)^{1-\frac{1}{r}}(|B||B_0|(\log p)^n)^{1/2r}\left(p^{n/4r}|I|+p^{n/2r}|I|^{1/2}\right) +|B|.|B_0|^{-1}+ n|B| p^{-\delta}\\
 &= |B|(|B||B_0|)^{-1/2r}(\log p)^{n/2r}\left(p^{n/4r}+ p^{n/2r}|I|^{-1/2}\right)+p^{2n\delta}+ np^{-\delta}|B|\\
\end{split}    
\end{equation}
In addition, we take $|I|\gg p^{\delta}$ with $\delta=n/(2r)$. Hence, 
\begin{equation*}
\sum_{x\in B} \chi(x)\ll |B|p^{-2\delta(\varepsilon -\delta)}(\log p)^{\delta}+p^{2n\delta}+ n|B|p^{-\delta}    
\end{equation*}
We choose $r$ so that $\delta=n/2r$ is close to $\varepsilon/2$. More precisely, let $r$ be the closest integer to the number $n\varepsilon^{-1}$. So, $r$ can be written as $r=n\varepsilon^{-1}+\frac{1}{2}\theta$ where $|\theta|\leq 1$. Substituting the value of $r$ in $\delta$ we deduce $\delta=\frac{\varepsilon}{2+(\theta \varepsilon/n)}$ and consequently $\varepsilon/3<\frac{\varepsilon}{2+\frac{1}{2n}}\leq \delta \leq \frac{\varepsilon}{2-\frac{1}{2n}}$. From the assumption $|B|\gg p^{n/4+n\varepsilon}$ and the upper and lower bounds of $\delta$  we have $p^{2n\delta}\ll |B|p^{-\delta}\leq |B|p^{-\varepsilon/3}$ which implies
\begin{equation*}
    \left|\sum_{x \in B}\chi(x )\right|\ll_{\varepsilon}|B|p^{-2\delta(\varepsilon-\delta)}(\log p)^{\delta}+n|B|p^{-\varepsilon/3}.
\end{equation*}
Finally,
\begin{equation}
    \begin{split}
        2\delta(\varepsilon-\delta)\geq 2.\frac{\varepsilon}{2+\frac{1}{2n}}\left(\varepsilon-\frac{\varepsilon}{2-\frac{1}{2n}}\right)\geq \frac{\varepsilon^2\left(1-\frac{1}{2n}\right)}{\left(1+\frac{1}{4n}\right)\left(2-\frac{1}{2n}\right)}.
    \end{split}
\end{equation}
Therefore, it is evident that 
\begin{equation*}
    \left|\sum_{x \in B}\chi(x)\right|\ll_{\varepsilon,n}|B|p^{-\varepsilon^2\frac{\left(1-\frac{1}{2n}\right)}{\left(1+\frac{1}{4n}\right)\left(2-\frac{1}{2n}\right)}}
\end{equation*}

\emph{Case 2. $\sqrt{p/2}<H_n<p^{1/2+\varepsilon/2}$}

\medskip
\noindent

Here we divide each edge of the box into $O(p^{\varepsilon/2})$ "almost equal" pieces of length less than $\sqrt{p/2}$. So, $B$ can be divided into $O((p^{\varepsilon/2})^n)$ boxes $B_{\alpha}$ of volume $\gg (p^{-\varepsilon/2})^np^{n(1/4+\varepsilon)}=p^{n(1/4+\varepsilon/2)}.$ Also, the boxes $B_{\alpha}$ have side lengths $H'_1,H'_2,\ldots, H'_n$, respectively, where $H_n'=H_np^{-\varepsilon/2}\geq H_{n-1}'=H_{n-1}p^{-\varepsilon/2}\geq \cdots\geq H_1'=H_1p^{-\varepsilon/2}\geq \left(\frac{p^{(1/4+\varepsilon/2)n}}{H'_{n-1}H'_n}\right)^{1/(n-2)}$ and $H_n'\leq \sqrt{p/2}$. Now, the character sum estimate over $B_{\alpha}$ satisfies a non-trivial upper bound by our previous case for all $\alpha$
\[\left|\sum_{x \in B_{\alpha}}\chi(x )\right|\ll_{\varepsilon}|B_{\alpha}|p^{-(\varepsilon^2/4)\frac{\left(1-\frac{1}{2n}\right)}{\left(1+\frac{1}{4n}\right)\left(2-\frac{1}{2n}\right)}}.\]
Finally, adding contributions of each $B_{\alpha}$ it follows
\[\left|\sum_{x \in B}\chi(x )\right|\ll_{\varepsilon}|B|p^{-(\varepsilon^2/4)\frac{\left(1-\frac{1}{2n}\right)}{\left(1+\frac{1}{4n}\right)\left(2-\frac{1}{2n}\right)}}.\]

\medskip
\noindent

\medskip

Next, we shall begin with the "Proof" of the additional part. The same argument of Case 1 and Case 2 also applies here. The only remaining case is $H_n>p^{1/2+\varepsilon/2}$ which has been discussed below in Case 3.

\medskip
\noindent

\emph{Case 3. (The {\rm (ii)} Part)} $H_n>p^{1/2+\varepsilon/2}.$

\medskip

We invoke the following estimate due to Katz \cite{Katz}.

\medskip

\begin{Proposition}[Katz]\label{KT}
Let $\chi$ be a non-trivial multiplicative character of $\mathbb{F}_{p^n}$ and let
$g\in\mathbb{F}_{p^n}$ generate the extension
$\mathbb{F}_{p^n}=\mathbb{F}_p(g)$. Then, for any interval
$I\subseteq [1,p]\cap\mathbb{Z}$,
\[
\left|\sum_{t\in I}\chi(g+t)\right|
\le c(n)\sqrt{p}\log p .
\]
\end{Proposition}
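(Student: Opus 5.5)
The approach is the standard completion (Pólya--Vinogradov) method, combined with a square-root bound for \emph{complete} sums of $t\mapsto\chi(g+t)$ over $\mathbb{F}_p$, twisted by additive characters. Fix a nontrivial additive character $\psi$ of $\mathbb{F}_p$, say $\psi(t)=e^{2\pi i t/p}$. For $I\subseteq[1,p]\cap\mathbb{Z}$, view $I$ as a subset of $\mathbb{F}_p$ and expand its indicator by orthogonality:
\[
\sum_{t\in I}\chi(g+t)=\frac1p\sum_{a\in\mathbb{F}_p}\Bigl(\sum_{s\in I}\psi(-as)\Bigr)\Bigl(\sum_{t\in\mathbb{F}_p}\chi(g+t)\psi(at)\Bigr).
\]
The geometric sums satisfy $\bigl|\sum_{s\in I}\psi(-as)\bigr|\le \min\{|I|,\,1/(2\|a/p\|)\}$. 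Summing over $a\neq0$ therefore gives $\sum_{a\ne0}\bigl|\sum_{s\in I}\psi(-as)\bigr|\ll p\log p$, and the $a=0$ term carries the factor $|I|/p\le1$.

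The proof thus reduces to the complete-sum estimate
\[
\Bigl|\sum_{t\in\mathbb{F}_p}\chi(g+t)\psi(at)\Bigr|\le c(n)\sqrt p\qquad\text{for all }a\in\mathbb{F}_p.
\]
Plugging this into the expansion gives $\frac1p\bigl(|I|+O(p\log p)\bigr)c(n)\sqrt p\ll c(n)\sqrt p\log p$, which is the claim.

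For the complete sum, I would realise $t\mapsto\chi(g+t)$, $t\in\mathbb{F}_p$, geometrically. Write $\chi(g+t)=\chi(\mathrm{N}\text{-type expression})$ by passing to the Weil restriction: over $\overline{\mathbb{F}}_p$ the map $t\mapsto g+t$ composed with $\mathrm{Res}_{\mathbb{F}_{p^n}/\mathbb{F}_p}\mathbb{G}_m$ splits as $t\mapsto(t+g,\,t+g^p,\dots,t+g^{p^{n-1}})$. The function is then the trace of Frobenius of the rank-one Kummer-type sheaf $\bigotimes_{i}\mathcal{L}_{\chi_i}(t+g^{p^i})$ on $\mathbb{A}^1$ minus the $n$ distinct points $-g^{p^i}$. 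These points are distinct precisely because $g$ generates $\mathbb{F}_{p^n}$; the $\chi_i$ are the Galois-conjugate characters of $\overline{\mathbb{F}}_p^\times$ attached to $\chi$. Twisting by $\mathcal{L}_{\psi(at)}$ gives a rank-one sheaf, tamely ramified at the finite points and at most wildly ramified (Swan conductor $\le1$) at $\infty$. Its nontriviality at some finite point follows from $\chi\neq\chi_0$, since some $\chi_i$ is nontrivial (all are conjugates of $\chi$). Hence $H^0_c=H^2_c=0$. The Grothendieck--Ogg--Shafarevich formula gives $\dim H^1_c\le n$, and Deligne's Weil II gives purity of weight $\le1$. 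This yields the bound with $c(n)=n$ (or $n-1$ when $a=0$).

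I expect the main obstacle to be the sheaf-theoretic step: the identification of $\chi(g+t)$ with the trace function of the tensor product of Kummer sheaves, and the verification that the resulting sheaf is geometrically nontrivial even when $\chi$ restricted to a subfield is trivial. This is exactly Katz's contribution, and in the paper I would simply cite it. The completion step is routine.
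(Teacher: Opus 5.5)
Your proposal is correct and matches the paper's approach: the paper's proof is only a citation of \cite[Theorem~1]{Katz}, and your Weil-restriction/Kummer-sheaf argument for the complete sums, followed by completion over the interval, is the standard derivation of that $\sqrt{p}\log p$ bound. You are right to make explicit that completion needs the complete sums twisted by $\psi(at)$; for these, $\dim H^1_c = n$ rather than $n-1$, because of Swan conductor $1$ at $\infty$, and this only changes the constant $c(n)$.
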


\begin{proof}
See \cite[Theorem~1]{Katz}.
\end{proof}

\medskip

We write
\begin{equation}\label{SB}
\sum_{x \in B}\chi(x )
=
\sum_{(x_1,\ldots,x_{n-1})\in I_1\times\cdots\times I_{n-1}}
\sum_{x_n\in I_n}
\chi\!\left(
x_1\frac{\omega_1}{\omega_n}
+\cdots+
x_{n-1}\frac{\omega_{n-1}}{\omega_n}
+x_n
\right),
\end{equation}
where $I_i=[N_i+1,N_i+H_i]\cap\mathbb{Z}$.

Define
\[
\Omega :=
\Bigl\{
(x_1,\ldots,x_{n-1})\in I_1\times\cdots\times I_{n-1} :
\mathbb{F}_p\!\left(
x_1\frac{\omega_1}{\omega_n}
+\cdots+
x_{n-1}\frac{\omega_{n-1}}{\omega_n}
\right)
\neq \mathbb{F}_{p^n}
\Bigr\}.
\]

\medskip

\textbf{Case 1: $(x_1,\ldots, x_{n-1})\notin \Omega $.}

\medskip

Now, for every $(x_1,\ldots,x_{n-1})\notin \Omega$, the element
\[
x_1\frac{\omega_1}{\omega_n}
+\cdots+
x_{n-1}\frac{\omega_{n-1}}{\omega_n}
\]
generates $\mathbb{F}_{p^n}$ over $\mathbb{F}_p$. Applying
Proposition~\ref{KT} and using the hypothesis
$H_n>p^{1/2+\varepsilon/2}$, we obtain
\[
\left|
\sum_{x_n\in I_n}
\chi\!\left(
x_1\frac{\omega_1}{\omega_n}
+\cdots+
x_{n-1}\frac{\omega_{n-1}}{\omega_n}
+x_n
\right)
\right|
\ll
\sqrt{p}\log p
\le
H_n\,p^{-\varepsilon/2}\log p .
\]
Estimating the remaining variables trivially yields
\[
\left|\sum_{x \in B}\chi(x )\right|
\ll_{\varepsilon}
|B|\,p^{-\varepsilon/3}.
\]

\medskip

\noindent\textbf{Case 2: $(x_1,\ldots,x_{n-1}) \in \Omega$.}

\medskip
We split it into two sub-cases according to our assumption.\\
\textbf{Case 2A: $(n,6)=1$.}
\medskip

In this case, the sum in \eqref{SB} is bounded by
\begin{equation}\label{SM}
\sum_{(x_1,\ldots,x_{n-1})\in \Omega}
\sum_{x_n\in I_n}
\chi\!\left(
x_1\frac{\omega_1}{\omega_n}
+\cdots+
x_{n-1}\frac{\omega_{n-1}}{\omega_n}
+x_n
\right).
\end{equation}
A trivial estimate gives
\[
\eqref{SM} \;\ll\; p\,|\Omega|.
\]
We now estimate $|\Omega|$. Let $G$ run over all non-trivial subfields of $\mathbb{F}_{p^n}$. Then
\begin{equation}\label{G}
|\Omega|
\;\leq\;
\sum_{G}
\left|
G \cap \mathrm{Span}_{\mathbb{F}_p}\!\left(
\frac{\omega_1}{\omega_n},\ldots,\frac{\omega_{n-1}}{\omega_n}
\right)
\right|.
\end{equation}

Since $(n,6)=1$, any proper subfield $G \subsetneq \mathbb{F}_{p^n}$ satisfies
\[
[\mathbb{F}_{p^n}:G]\geq 5,
\qquad\text{and hence}\qquad
[G:\mathbb{F}_p]\leq \frac{n}{5}.
\]
Moreover, as
\[
1 \notin \mathrm{Span}_{\mathbb{F}_p}\!\left(
\frac{\omega_1}{\omega_n},\ldots,\frac{\omega_{n-1}}{\omega_n}
\right),
\]
we obtain
\[
\dim_{\mathbb{F}_p}
\left(
G \cap \mathrm{Span}_{\mathbb{F}_p}\!\left(
\frac{\omega_1}{\omega_n},\ldots,\frac{\omega_{n-1}}{\omega_n}
\right)
\right)
\leq \frac{n}{5}-1.
\]
It follows that
\[
|\Omega| \;\ll\; p^{\frac{n}{5}-1},
\]
and consequently,
\[
\eqref{SM} \;\ll\; p^{\frac{n}{5}}.
\]
Finally, recalling that $|B|\gg p^{\frac{n}{4}}$, we deduce
\[
\eqref{SM} \;\ll\; p^{-\frac{n}{20}}\,|B|.
\]
This completes the proof in this case.

\medskip

\textbf{Case 2B:\,($(n,6)\neq 1 \text{ with } \chi_{\restriction \mathbb{F}_{p^{n/r}}} \neq \chi_0$ for r=2 and 3)}

\medskip
As in Case~2A, if 
\[
\left[\mathbb{F}_{p^n}:\mathbb{F}_p\!\left(
\sum_{j=1}^{n-1} x_j \frac{\omega_j}{\omega_n}
\right)\right]\geq 5,
\]
then the character sum can be estimated using \eqref{SM} and \eqref{G}, yielding the same bound as before. 
It therefore remains to treat the cases corresponding to subfields of index $r=2$ and $r=3$.
So, it is enough to estimate 
\begin{equation}\label{S}
\sum' :=
\sum_{(x_1,\ldots,x_{n-1})\in \Omega_r}
\sum_{x_n\in I_n}
\chi\!\left(
x_1\frac{\omega_1}{\omega_n}
+\cdots+
x_{n-1}\frac{\omega_{n-1}}{\omega_n}
+x_n
\right),
\end{equation}
for $r=2 \text{ and } 3$ where
\[
\Omega_r :=
\Bigl\{
(x_1,\ldots,x_{n-1})\in I_1\times\cdots\times I_{n-1}
:
\mathbb{F}_p\!\left(
\sum_{j=1}^{n-1} x_j \frac{\omega_j}{\omega_n}
\right)
\subseteq \mathbb{F}_{p^{n/r}}
\Bigr\}.
\]

Since $1,\omega_1/\omega_n,\ldots,\omega_{n-1}/\omega_n$ are linearly independent over $\mathbb{F}_p$, at most $(n/r)-1$ of the ratios $\omega_j/\omega_n$ lie in $\mathbb{F}_{p^{n/r}}$. Reordering if necessary, we assume
\[
\omega_j/\omega_n \in \mathbb{F}_{p^{n/r}} \ (1\le j\le k),
\qquad
\omega_j/\omega_n \notin \mathbb{F}_{p^{n/r}} \ (k<j<n),
\]
with $k\le n/r$.

Fixing $x_1,\ldots,x_{n-2}$, there is exactly one choice of
$x_{n-1}$ for which
\[
\sum_{j=1}^{n-1} x_j \frac{\omega_j}{\omega_n}
\in \mathbb{F}_{p^{n/r}},
\]
since otherwise subtraction yields a contradiction to
$\omega_{n-1}/\omega_n \notin \mathbb{F}_{p^{n/r}}$.
Hence
\[
|\Omega_r|\le |I_1|\cdots|I_{n-2}|,
\]
and therefore
\[
\sum' \le \frac{|B|}{H_{n-1}}.
\]
If $H_{n-1}>p^{\varepsilon/2}$ the claim follows. Else, $H_{n-1}\leq p^{\varepsilon/2}$. Therefore, 
\begin{equation}\label{nwe}
H_{k+1}\cdots H_{n-1}\le p^{\varepsilon n/2}.
\end{equation}
Define
\[
W_r :=
\left\{
\sum_{i=1}^{k} x_i \frac{\omega_i}{\omega_n} + x_n
:
x_i\in I_i \ (1\le i\le k), \ x_n\in I_n
\right\}
\subseteq \mathbb{F}_{p^{n/r}}.
\]
From \eqref{nwe} we obtain
\begin{equation}\label{Wsize}
|W_r|
=
\frac{|B|}{H_{k+1}\cdots H_{n-1}}
\ge
p^{(1/4+\varepsilon)n-\varepsilon n/2}
=
p^{(1/4+\varepsilon/2)n}.
\end{equation}
At this point, let us recall Pólya-Vinogradov inequality.
\begin{Proposition}[Pólya--Vinogradov over $\mathbb{F}_{p^d}$]\label{PV}
Let $\chi$ be a nontrivial $(\neq \chi_0)$ multiplicative character on $\mathbb{F}_{p^n}$. Let
\[
B=\{x_1\omega_1+\cdots+x_n\omega_n:\; x_i\in I_i\subset \mathbb{F}_p\}
\]
be a box where $\{\omega_1,\ldots, \omega_n\}$ is a basis of $\mathbb{F}_{p^n}$ over $\mathbb{F}_p$. Then for any $a\in\mathbb{F}_{p^n}$,
\[
\sum_{x\in B}\chi(x+a)\ll p^{n/2}(\log p)^n.
\]
\end{Proposition}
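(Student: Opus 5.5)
The plan is the classical completion argument via finite Fourier analysis on the additive group $\mathbb{F}_{p^n}\cong\mathbb{F}_p^n$, with additive characters indexed through the dual basis of $\{\omega_1,\ldots,\omega_n\}$ under the trace form.

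\textbf{Step 1 (Fourier expansion).} Let $\psi(x)=e^{2\pi i\,\mathrm{Tr}(x)/p}$, where $\mathrm{Tr}:\mathbb{F}_{p^n}\to\mathbb{F}_p$ is the absolute trace, and write $\mathbf 1_B$ for the indicator function of $B$. Expanding $\mathbf 1_B$ in additive characters,
\[
\mathbf 1_B(x)=p^{-n}\sum_{\xi\in\mathbb{F}_{p^n}}\widehat{\mathbf 1_B}(\xi)\,\psi(\xi x),\qquad \widehat{\mathbf 1_B}(\xi)=\sum_{b\in B}\psi(-\xi b).
\]
Substituting this into $\sum_{x\in B}\chi(x+a)=\sum_{y}\mathbf 1_B(y-a)\chi(y)$ gives
\[
\sum_{x\in B}\chi(x+a)=p^{-n}\sum_{\xi}\widehat{\mathbf 1_B}(\xi)\,\psi(-\xi a)\sum_{y\in\mathbb{F}_{p^n}}\chi(y)\psi(\xi y).
\]

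\textbf{Step 2 (Gauss sums).} For $\xi=0$ the inner sum $\sum_y\chi(y)$ vanishes, because $\chi\neq\chi_0$. For $\xi\neq0$ the inner sum is a Gauss sum of absolute value exactly $p^{n/2}$. Hence
\[
\Bigl|\sum_{x\in B}\chi(x+a)\Bigr|\le p^{-n/2}\sum_{\xi}\bigl|\widehat{\mathbf 1_B}(\xi)\bigr|.
\]

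\textbf{Step 3 (factorization and the $L^1$ bound).} Let $\{\omega_1^*,\ldots,\omega_n^*\}$ be the dual basis, so that $\mathrm{Tr}(\omega_i^*\omega_j)=\delta_{ij}$. Writing $\xi=\sum_i\xi_i\omega_i^*$, we get $\mathrm{Tr}(\xi b)=\sum_i\xi_i b_i$ for $b=\sum_i b_i\omega_i$. The Fourier transform therefore factors as
\[
\widehat{\mathbf 1_B}(\xi)=\prod_{i=1}^n\sum_{t\in I_i}e(-\xi_i t/p).
\]
Summing over $\xi$ splits into a product of $n$ one-dimensional sums. Each is bounded by the standard estimate
\[
\sum_{\xi_i\bmod p}\Bigl|\sum_{t\in I_i}e(\xi_i t/p)\Bigr|\le |I_i|+\sum_{0<|\xi_i|<p/2}\frac{1}{2\|\xi_i/p\|}\ll p\log p,
\]
which uses $|I_i|\le p$. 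It follows that $\sum_\xi|\widehat{\mathbf 1_B}(\xi)|\ll (p\log p)^n$, and therefore
\[
\sum_{x\in B}\chi(x+a)\ll p^{-n/2}(p\log p)^n=p^{n/2}(\log p)^n.
\]

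\textbf{Main obstacle.} The only delicate point is the bookkeeping in Step 3. The box is described in the basis $\omega_i$, so the additive characters must be parametrized by the dual basis for the Fourier transform of $B$ to split into one-dimensional geometric sums. Once that is in place, the rest is the classical $L^1$ estimate for Dirichlet-kernel-type sums together with the Gauss sum evaluation, both routine.
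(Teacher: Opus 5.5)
Your proof is correct and takes the same route as the paper. The paper also expands $\mathbf 1_B$ in additive characters, discards the zero frequency because $\chi\neq\chi_0$, bounds each remaining Gauss sum by $p^{n/2}$, and controls the total coefficient mass by $(\log p)^n$ via the one-dimensional Dirichlet-kernel bound; your trace dual basis simply spells out what the paper leaves implicit when it treats $\psi(\mathbf t\cdot\mathbf x)$ as an additive character of $\mathbb{F}_{p^n}$.
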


\begin{proof} We shall briefly sketch the proof as it follows a standard Fourier theoretic argument. Let $\mathbf{x}=(x_1,\ldots, x_d)$ for $x=x_1\omega_1+x_2\omega_2+x_d\omega_d$. Expanding each interval $I_i$ into additive characters, we write
\[
1_B(x)=\sum_{\mathbf{t}\in\mathbb{F}_p^n} c_{\mathbf{t}}\,\psi(\mathbf{t}\cdot \mathbf{x}),
\]
where $\mathbf{t}.\mathbf{x}=(t_1x_1+\cdots+t_nx_n)$, $\psi(x)=e(\frac{2\pi i x}{p})$, $c_{\mathbf{t}}=\frac{1}{p^n}\sum_{\mathbf{x}\in B}\psi(-\mathbf{t}\cdot \mathbf{x})$ and $\sum_{\mathbf{t}}|c_{\mathbf{t}}|\ll (\log p)^n$. Hence
\[
\sum_{x\in B}\chi(x+a)
=
\sum_{\mathbf{t}} c_{\mathbf{t}} \sum_{x\in\mathbb{F}_{p^n}}\chi(x+a)\psi_{\mathbf{t}}(x).
\]
After substituting $x+a$ with $x$, the inner sum becomes a Gauss sum, which vanishes for $\mathbf{t}=0$ and is $O(p^{n/2})$ otherwise (since, $\chi\neq \chi_0$). The result follows.
\end{proof}

From the initial assumption, we obtain that $\chi$ restricted to $\mathbb{F}_{p^{n/r}}$ is non-principal. So, by Proposition \ref{PV}, one deduces that
\[
\sum_{y\in W_r}\chi(y+z)
\leq
(\log p)^{n/r}
|\mathbb{F}_{p^{n/r}}|^{1/2}
\leq
(\log p)^{n/2} p^{n/4}\leq p^{n\varepsilon/4}p^{n/4}.
\]
Combining this with \eqref{Wsize}, we deduce
\[
\sum_{y\in W_r}\chi(y+z)
\le
p^{-\varepsilon n/4}|W_r|.
\]
Consequently, by trivially estimating other intervals in \eqref{S} it follows 
\[
\sum'
\le
H_{k+1}\cdots H_{n-1}\,
p^{-\varepsilon n/4}\,
|W_r|
=
p^{-\varepsilon n/4}|B|,
\]
which completes the argument. This ends the proof of Theorem \ref{MT}.


\begin{thebibliography}{20}	

\bibitem{Ban} W. Banaszczyk, {\it Inequalities for convex bodies and polar reciprocal lattices in $R^n$}, Discrete Comput. Geom. 13 (2), 217-231 (1995).

\bibitem{BHW} U. Betke, M. Henk, J. M. Wills, {\it Successive-minima-types inequalities}, Discrete Comput. Geom., 9:2 (1993), 165-175 

\bibitem{B1} D. A. Burgess, {\it On character sums and primitive roots}, 
Proc. London Math. Soc. (3) 12 (1962), 179-192.

\bibitem{B2} D. A. Burgess, {\it Character sums and primitive roots in finite fields}, 
Proc. London Math. Soc. (3) 37 (1967), 11-35.

\bibitem{C1} M. C. Chang, {\it On a question of Davenport and Lewis and new character sum bounds in finite fields}, 
Duke Math. J. 145:3 (2008), 409-422.

\bibitem{Cha} M.-C. Chang, {\it Burgess inequality in $\mathbb{F}_{p^2}$}, 
Geom. Funct. Anal. 19 (2009), 1001--1016.

\bibitem{Ch} A. Chattopadhyay, {\it A Short Character Sum in $\mathbb{F}_{p^3}$}, https://doi.org/10.48550/arXiv.2505.19654.

\bibitem{DL} H. Davenport, D. Lewis, {\it Character sums and primitive roots in finite fields}, 
Rend. Circ. Matem. Palermo-Serie II-Tomo XII-Anno (1963).


\bibitem{JH} J. Friedlander, H. Iwaniec, {\it Estimates of character sums}, 
Proc. Amer. Math. Soc. 119:2 (1993), 265-372.

\bibitem{GB} M. Gabdullin, {\it Estimates for character sums in finite fields of order $p^2$ and $p^3$}, Proc. Steklov Inst. Math., vol. 303 (2018), 36-49.

\bibitem{K} A. A. Karatsuba, {\it Estimates of character sums}, 
Math. USSR Izv. 4 (1970), no.~1, 19--29.

\bibitem{Katz} N. Katz, {\it An estimate of character sums}, Journal of the American Mathematical Society Vol. 2, No 2 1989, 197-200.

\bibitem{Kon} S. V. Konyagin, {\it Estimates for character sums in finite fields (Russian)}, 
Mat. Zametki 88 (2010), no. 4, 529-542; translation in Math. Notes 88 (2010), no. 3-4, 503-515.

\bibitem{Sch} W.M. Schmidt {\it Equations over Finite Fields: An Elementary Approach},
Lecture Notes in Mathematics. 536. Berlin-Heidelberg-New York: Springer-Verlag. ix, 267 p. (1976).

\bibitem{TV} T. Tao, V. Vu, {\it Additive Combinatorics}, Cambridge University Press, 2006.

\end{thebibliography}
\end{document}